\def\thm@space@setup{%
  \thm@preskip=6pt plus 2pt minus 1pt
  \thm@postskip=0pt
}
\renewenvironment{proof}[1][\proofname]{%
	\par
	\pushQED{\qed}%
	\normalfont
	\topsep6\p@\@plus6\p@\relax
	\trivlist
	\item[\hskip\labelsep\itshape #1\@addpunct{.}]%
	\ignorespaces
}{%
	\par\nobreak\vspace{2pt}
	\popQED%
	\medskip
	\endtrivlist
	\@endpefalse
}
\setlist[enumerate]{label=(\arabic*),leftmargin=*,itemsep=2pt}
\setlist[itemize]{leftmargin=*,itemsep=2pt}
\DeclareMathOperator{\aut}{Aut}
\numberwithin{equation}{section}
\numberwithin{figure}{section}
\numberwithin{table}{section}
\newtheorem{theorem}[equation]{Theorem}
\newtheorem{thm}[equation]{Theorem}      
\newtheorem{prop}[equation]{Proposition}
\newtheorem{lem}[equation]{Lemma}
\newtheorem{cor}[equation]{Corollary}
\theoremstyle{definition}
\newtheorem{defn}[equation]{Definition}
\theoremstyle{remark}
\newtheorem{rmk}[equation]{Remark}
\newcommand{\namedqedline}[1]{%
	\par\vspace{0.5em}%
	\noindent\hfill\fbox{\textbf{\scriptsize #1}}\par\vspace{0.25em}%
}
\let\c@figure\c@equation
\let\c@table\c@equation
\title[Schottky pairs on Trees via Continued Fractions and Axial Geometry]
{Schottky pairs on Trees via Continued Fractions and Axial Geometry}
\author{Yukun Du}
\address[Y. Du]{Department of Mathematics, University of Georgia, Athens, GA 30603}
\email{yukun.du@uga.edu}
\author{Sa'ar Hersonsky}
\address[S. Hersonsky]{Department of Mathematics, University of Georgia, Athens, GA 30603}
\urladdr{http://www.math.uga.edu/~saarh}
\email{saarh@uga.edu}
\date{\today}
\subjclass[2020]{20E08}
\keywords{Tree automorphisms, free groups, Schottky groups, translation lengths, continued fractions}
\begin{document}

\begin{abstract}
	We give a complete criterion for when two hyperbolic automorphisms of a tree generate a free, discrete subgroup. The decision depends only on three geometric invariants: the translation lengths of the generators and the length of overlap of their axes. This data is organized using the continued-fraction expansion of the translation-length ratio. We extend the result to weighted trees, allowing arbitrary positive real translation lengths under local finiteness. In the irrational case, the exceptional configurations are shown to correspond precisely to the gap lengths in the three-gap theorem.
\end{abstract}
		\maketitle

\section{Introduction}\label{sec:introduction}

This paper is concerned with the geometry and algebra of free, discrete subgroups generated by pairs of hyperbolic automorphisms of trees. Our approach is geometric and reduction-based: we encode a generating pair through their translation lengths and the configuration of their axes, and seek criteria that determine whether the resulting subgroup is free of rank two and, in that case, whether it is generated by a Schottky pair.

The classical motivation arises from the theory of Fuchsian and Kleinian groups, i.e.\ discrete subgroups of $\mathrm{PSL}(2,\mathbb{R})$ and $\mathrm{PSL}(2,\mathbb{C})$, which act by isometries on the hyperbolic planes $\mathbb{H}^2$ and $\mathbb{H}^3$. Two-generator free subgroups (Schottky groups) in this setting can often be detected by geometric inequalities involving traces of the generating matrices and angles or distances between their axes; see, e.g.\ J{\o}rgensen's inequality~\cite{jorgensen1976discrete}, which gives a necessary condition for discreteness, or the sharp length--angle criteria of Rosenberger~\cite{rosenberger1972fuchssche}. While these results are well-understood in the archimedean setting, the corresponding problem over non-archimedean fields remains far less explicit.

This motivates the study of analogous phenomena in the realm of trees. When $K$ is a non-archimedean local field, the group $\mathrm{PSL}(2,K)$ acts by isometries on its associated Bruhat--Tits tree, and questions of discreteness, freeness, and reduction become intertwined with the geometry of that action. More generally, one can view $\mathrm{Aut}(X)$, the automorphism group of a locally finite tree $X$, as a combinatorial analogue of $\mathrm{Isom}(\mathbb{H}^n)$ for the purpose of studying non-archimedean Schottky groups.

The overarching problem of understanding free discrete tree actions has deep roots in the study of $p$-adic groups, combinatorial geometry, and number theory. Such actions play a central role in the structure of rank-one $p$-adic Lie groups, where discrete free subgroups serve as convex-cocompact lattices—often arising as fundamental groups of Mumford curves or $p$-adic Drinfeld modular varieties. A foundational result of Lubotzky \cite{lubotzky1991lattices} asserts that any finitely generated, torsion-free, discrete subgroup of $\aut(X)$ is automatically free and Schottky \cite[Proposition 1.7]{lubotzky1991lattices}, highlighting a rigidity phenomenon: while discreteness forces freeness in this context, detecting when a given generating pair yields a Schottky subgroup remains a subtle geometric problem. These issues are closely tied to expansion, spectral theory, and invariant measures, as developed in Lubotzky’s monograph \cite{lubotzky1983tree}. Beyond algebraic and geometric rigidity, Schottky tree actions also serve as uniformizing groups for $p$-adic analytic curves such as Mumford curves (see \cite{gerritzenvdput}), and their structural and spectral properties govern the fractal geometry of their limit sets. In joint work with Hubbard \cite{hersonsky1997hausdorff}, the second-named author showed that the Hausdorff dimension of such limit sets can be computed explicitly using transfer operators. These connections underscore the need for a precise geometric criterion describing when a pair of hyperbolic automorphisms generates a Schottky subgroup of $\aut(X)$.
Yet, a precise geometric criterion characterizing when a given pair of hyperbolic automorphisms generates a free (or Schottky) subgroup is still lacking. With this in mind, we are interested in the following question.

\smallskip

\noindent\textbf{Question.}
\label{qu:question_main}
Let $\gamma_{1},\gamma_{2}\in\aut(X)$ be hyperbolic automorphisms of a 
locally finite tree $X$, with translation lengths $m_{1}$ and $m_{2}$, 
and suppose their axes $A_{\gamma_{1}}$ and $A_{\gamma_{2}}$ intersect. 
Assume that the subgroup $\Gamma=\langle\gamma_{1},\gamma_{2}\rangle$ is discrete. 
Under what conditions is $\Gamma$ free of rank two? 
If $\Gamma$ is free, can one find a Schottky pair 
$\gamma_{\mathrm{a}},\gamma_{\mathrm{b}}$ generating $\Gamma$?

This question plays a central role in both classical and $p$-adic geometric group theory. In the setting of $\mathrm{SL}_2(K)$ acting on the Bruhat--Tits tree, Conder~\cite{conder2020discrete} developed an algorithmic reduction procedure using Nielsen moves to determine either that a word is elliptic or that the pair satisfies a Schottky ping--pong hypothesis. His approach yields a constructive membership criterion for two-generator free subgroups of $\mathrm{SL}_2(K)$.

Our aim here is different but complementary. Rather than solving the membership problem, we work in the general geometric setting of $\aut(X)$ and give a complete structural classification of when a pair of hyperbolic automorphisms generates a free discrete subgroup, expressed entirely in terms of translation lengths and the length
\begin{equation}\label{eq:l-intro}
	l = \ell(A_{\gamma_1}\cap A_{\gamma_2}),
\end{equation}
of intersection of their axes (with $l=0$ in the disjoint case). Up to conjugacy, the triple $(m_1,m_2,l)$ determines the group and its Nielsen-reduced generating pair. Our main result shows that $\Gamma$ is free if and only if a certain explicit condition involving $m_1, m_2$, and the continued-fraction expansion of $m_2/m_1$ is satisfied. In particular, for fixed $(m_1,m_2)$, the parameter $l$ cleanly separates the free cases from the elliptic ones.

Furthermore, we extend this classification framework to \emph{weighted trees}, where edge lengths are arbitrary positive real numbers. In that context, when the ratio $m_2/m_1$ is irrational, the possible non-free values of $l$ form a discrete set in $(0,m_1+m_2)$, and we show these values coincide with the gap lengths appearing in the three-gap theorem~\cite{marklof2017three}. This further highlights a subtle Diophantine phenomenon governing the boundary of freeness for tree-based Schottky groups.

\medskip

\noindent
\textbf{Organization.} In Section~\ref{sec:preliminaries}, we recall foundational facts about tree automorphisms and the Nielsen moves used throughout the paper. In Section~\ref{sec:schottky-criterion}, we define geometric triples and prove our primary classification result, Theorem~\ref{thm:main}. Section~\ref{sec:weighted} extends this classification to weighted trees and identifies a link between axis-intersection lengths and the three-gap theorem via continued fractions, culminating in Theorem~\ref{thm:weighted}.

\section{Preliminaries}
\label{sec:preliminaries}
In this section we introduces the basic framework of tree automorphisms and sets the geometric foundations used throughout the paper. In particular, we recall standard combinatorial notions such as axes, translation length, and the visual boundary, and describe how hyperbolic automorphisms act by translations along unique bi-infinite geodesics. We then develop the Nielsen reduction process, formulated as elementary moves on generator pairs that strictly decrease the total translation length. This sets up the length-reduction scheme that drives the main theorems, with Lemma 2.7 establishing the key contraction property under Nielsen moves.
\subsection{Trees and Tree Automorphisms}\label{subsec:trees}

A \emph{tree} is a connected, acyclic simple graph $X$ with vertex set $\mathcal V(X)$ and edge set $\mathcal E(X)$. For vertices $v,w\in\mathcal V(X)$, the length of the unique simple path $[v,w]$ defines the combinatorial distance
\begin{equation}
d(v,w)=\ell([v,w]).
\end{equation}

An automorphism of $X$ is a graph isomorphism $g:X\to X$. The group of all automorphisms is denoted $\aut(X)$.

\begin{defn}\label{def:aut-types}
	An element $g\in\aut(X)$ is called \emph{elliptic} if it fixes a vertex, \emph{hyperbolic} if it fixes no vertex and no edge (up to inversion), and an \emph{inversion} if it preserves but flips an edge.
\end{defn}

We focus on inversion-free actions. The following classical facts (due to Serre \cite[Ch.~I]{serre2002trees}) will be used throughout, stated without proof.

\begin{prop}[Elliptic case]\label{prop:elliptic}
If $g\in\aut(X)$ is elliptic, then it fixes a (possibly degenerate) subtree $T_g\subset X$. Moreover, for any $v\in \mathcal V(X)$,
\begin{equation}
d(v,g.v)=2d(v,T_g).
\end{equation}
\end{prop}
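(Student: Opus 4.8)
The plan is to take $T_g$ to be the fixed-point set $\mathrm{Fix}(g)$ and to lean on two elementary features of trees: geodesics between vertices are unique, and any three vertices have a well-defined median (equivalently, every subtree, being convex, admits a unique nearest-point projection).

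First I would check that $\mathrm{Fix}(g)$ is a nonempty subtree. Nonemptiness is exactly the hypothesis that $g$ is elliptic. For connectivity, suppose $g.u=u$ and $g.w=w$. Since $g$ is a graph automorphism, $g([u,w])$ is again a path joining $g.u=u$ to $g.w=w$, so by uniqueness of geodesics $g([u,w])=[u,w]$ as a set; as $g$ fixes both endpoints it cannot reverse this path, hence fixes $[u,w]$ pointwise (and each of its edges, a simple graph having at most one edge between two vertices). Thus $\mathrm{Fix}(g)$, together with the connecting segments it already contains, is a subtree, which may well reduce to a single vertex; call it $T_g$.

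For the distance formula, fix $v\in\mathcal V(X)$, let $p\in T_g$ be the nearest point of $T_g$ to $v$, and put $d=d(v,p)=d(v,T_g)$. Let $m$ be the median of $v$, $g.v$, $p$; then $m$ lies on each of the three geodesics $[v,p]$, $[v,g.v]$, $[p,g.v]$, and in particular $d(v,g.v)=d(v,m)+d(m,g.v)$. The crux is that $m$ is fixed by $g$: since $g.p=p$, the automorphism $g$ maps $[v,p]$ isometrically onto $[g.v,p]$, so $g.m$ is the unique point of $[g.v,p]$ at distance $d(m,p)$ from $p$; but $m$ itself lies on $[g.v,p]$ at that same distance from $p$, whence $g.m=m$. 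Therefore $m\in\mathrm{Fix}(g)=T_g$, and because $m\in[v,p]$ we get $d(v,m)\le d(v,p)=d(v,T_g)\le d(v,m)$, forcing $m=p$. Hence $[v,g.v]=[v,p]\cup[p,g.v]$ and $d(v,g.v)=d(v,p)+d(p,g.v)=2d$, as claimed (the case $v\in T_g$ being the trivial instance $d=0$).

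The argument is short, and the only point requiring care is the median computation above — locating $g.m$ inside $[g.v,p]$ and recognizing it as $m$ — since everything else is a formal consequence of unique geodesics and nearest-point projection onto a convex subtree. I would also note that ellipticity is used only to guarantee $T_g\neq\emptyset$; the displayed identity fails verbatim for hyperbolic $g$, where the analogous statement involves the translation length and the axis rather than a fixed subtree.
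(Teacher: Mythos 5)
Your proof is correct. The paper itself gives no argument here: Proposition~\ref{prop:elliptic} is listed among the ``classical facts (due to Serre)\ldots stated without proof,'' so there is nothing to compare against line by line. Your route --- taking $T_g=\mathrm{Fix}(g)$, checking convexity via uniqueness of geodesics, and then showing the median $m$ of $v$, $g.v$, $p$ is fixed and hence equals the projection $p$ --- is the standard argument (essentially the one in Serre, Ch.~I). The only step you leave implicit is the final equality $d(p,g.v)=d(p,v)$, which follows at once from $g$ being an isometry fixing $p$; everything else, including the key identification of $g.m$ with $m$ inside $[p,g.v]$, is carried out correctly.
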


\begin{prop}[Hyperbolic case]\label{prop:hyperbolic}
If $g\in\aut(X)$ is hyperbolic, then the minimum displacement
\begin{equation}
m_g=\min_{v\in\mathcal V(X)}d(v,g.v)\in\mathbb N_{>0}
\end{equation}
is achieved exactly along a bi-infinite geodesic $A_g$, called the \emph{axis} of $g$. Moreover,
\begin{equation}
d(v,g.v)=m_g + 2d(v,A_g).
\end{equation}
\end{prop}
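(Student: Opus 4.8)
The plan is to recover this (classical) statement by the standard displacement‑minimization argument. \emph{Existence and positivity.} For every vertex $v$ the quantity $d(v,g.v)$ is a non‑negative integer, so $m_g=\min_v d(v,g.v)$ is attained; moreover $m_g\ge 1$, since $m_g=0$ would give $g.v=v$ and hence $g$ elliptic. \emph{Rigidity at a minimizing vertex.} Fix $v_0$ with $d(v_0,g.v_0)=m_g$ and consider the geodesic segments $[g^{-1}.v_0,v_0]$ and $[v_0,g.v_0]$. I claim there is no backtracking at $v_0$, i.e.\ their concatenation is again a geodesic. Indeed, suppose the common neighbour of $v_0$ on both segments is $w$. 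Applying $g$ to $[v_0,g^{-1}.v_0]$ shows that $g.w$ is the neighbour of $g.v_0$ lying on $[v_0,g.v_0]$; thus $w$ and $g.w$ both lie on the geodesic $[v_0,g.v_0]$, at distances $1$ and $m_g-1$ from $v_0$, so $d(w,g.w)=|m_g-2|$. For $m_g\ge 2$ this contradicts minimality of $m_g$; for $m_g=1$ the same relations force $g^2.v_0=v_0$ with $g.v_0\ne v_0$, i.e.\ $g$ flips an edge, contradicting that $g$ is hyperbolic. Hence $[g^{-1}.v_0,v_0]\cup[v_0,g.v_0]$ is a geodesic of length $2m_g$.

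\emph{Construction of the axis.} Each vertex $g^n.v_0$ is again minimizing (as $g^n$ is an isometry), so the previous step gives no backtracking at any $g^n.v_0$ in $[g^{n-1}.v_0,g^n.v_0]\cup[g^n.v_0,g^{n+1}.v_0]$. Therefore
\[
A_g:=\bigcup_{n\in\mathbb Z}[\,g^n.v_0,\ g^{n+1}.v_0\,]
\]
is a bi‑infinite geodesic, it is $g$‑invariant, and $g$ acts on it as translation by $m_g$; in particular $d(x,g.x)=m_g$ for every vertex $x\in A_g$. \emph{The displacement formula.} For an arbitrary vertex $v$ let $p$ be its nearest‑point projection onto the geodesic $A_g$, so $d(v,p)=d(v,A_g)$ and $[v,p]\cap A_g=\{p\}$ (a standard property of projections onto a geodesic in a tree). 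Then the edge of $[p,v]$ incident to $p$ is not contained in $A_g$, hence differs from the edge of $[p,g.p]\subset A_g$ incident to $p$, so there is no backtracking at $p$; applying $g$, the edge of $[g.p,g.v]=g.[p,v]$ incident to $g.p$ is again not in $A_g$, while the edge of $[p,g.p]\subset A_g$ incident to $g.p$ is, so there is no backtracking at $g.p$ either. By uniqueness of geodesics in a tree,
\[
[v,g.v]=[v,p]\cup[p,g.p]\cup[g.p,g.v],
\]
and measuring lengths yields $d(v,g.v)=d(v,p)+m_g+d(g.p,g.v)=m_g+2\,d(v,A_g)$. In particular $d(v,g.v)=m_g$ exactly when $v\in A_g$, which shows both that the minimum is realized precisely along $A_g$ and that $A_g$ is independent of the chosen minimizing vertex $v_0$.

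\emph{Where the work is.} The content lies almost entirely in the rigidity step: a backtracking at a minimizing vertex must be turned into a strictly shorter displacement, and the one genuinely delicate point is the short‑translation corner case $m_g=1$, where the contradiction is supplied not by minimality but by the inversion‑free (here, hyperbolic) hypothesis. The facts used in the last step — existence and uniqueness of nearest‑point projections onto a geodesic, uniqueness of geodesics, and that a path with no backtracking is a geodesic — are the routine features of tree geometry that one invokes without comment.
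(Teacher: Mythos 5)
The paper states this proposition without proof, citing Serre's \emph{Trees}, so there is no in-paper argument to compare against; your write-up is a correct and complete reconstruction of the standard proof (no backtracking at a minimizing vertex, translation along the resulting bi-infinite geodesic, then the nearest-point-projection formula), including the genuinely delicate $m_g=1$ corner case, which you correctly resolve via the inversion-free hypothesis rather than minimality. Nothing to correct.
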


\begin{cor}\label{cor:aut_type}
Let $g\in \aut(X)$ and $v\in\mathcal V(X)$. Then
\begin{itemize}
\item $g$ is elliptic or an inversion if and only if $d(g^{-1}.v,g.v)\le d(v,g.v)$;
\item $g$ is hyperbolic if and only if $d(g^{-1}.v,g.v)>d(v,g.v)$.
\end{itemize}
\end{cor}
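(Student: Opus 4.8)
The plan is to prove the two biconditionals at once by invoking the classical trichotomy---every $g\in\aut(X)$ is elliptic, an inversion, or hyperbolic, and these three types are mutually exclusive (see \cite[Ch.~I]{serre2002trees})---and then computing the quantity $d(g^{-1}.v,g.v)$ in each case. Concretely, I will show that $d(g^{-1}.v,g.v)\le d(v,g.v)$ whenever $g$ is elliptic or an inversion, and that $d(g^{-1}.v,g.v)>d(v,g.v)$ whenever $g$ is hyperbolic. Since the three types partition $\aut(X)$, each of the two displayed conditions then forces, and is forced by, the corresponding dichotomy of types, which is exactly the assertion of the corollary.

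The inversion case I would fold into the elliptic one by passing to the barycentric subdivision $X'$ of $X$. The induced action of $g$ on $X'$ is inversion-free; $g$ is elliptic on $X'$ precisely when $g$ is elliptic or an inversion on $X$ (fixing a vertex of $X$, respectively the midpoint of the flipped edge), and hyperbolic on $X'$ precisely when $g$ is hyperbolic on $X$; and $d_{X'}$ restricted to $\mathcal V(X)$ equals $2\,d_X$. Hence the inequality to be proved on $X$ is equivalent to the one on $X'$, so it suffices to treat $g$ elliptic and $g$ hyperbolic when the ambient tree has no inversions.

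For $g$ elliptic, let $T_g$ be its fixed subtree (Proposition~\ref{prop:elliptic}) and let $p$ be the nearest-point projection of $v$ onto $T_g$, so $d(v,p)=d(v,T_g)$ and $d(v,g.v)=2\,d(v,T_g)$. Since $g$ and $g^{-1}$ fix $p$, applying the isometries $g^{\pm1}$ gives $d(g^{-1}.v,p)=d(v,p)=d(g.v,p)$, and the triangle inequality yields $d(g^{-1}.v,g.v)\le d(g^{-1}.v,p)+d(p,g.v)=2\,d(v,p)=d(v,g.v)$. For $g$ hyperbolic, let $p$ be the nearest-point projection of $v$ onto the axis $A_g$; because $A_g$ is $g$-invariant and $g$ is an isometry, $g^{\pm1}.p$ is the projection of $g^{\pm1}.v$, so $d(g^{\pm1}.v,g^{\pm1}.p)=d(v,A_g)$. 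The points $g^{-1}.p,\,p,\,g.p$ lie consecutively on $A_g$ at spacing $m_g$, hence $d(g^{-1}.p,g.p)=2m_g$, and the concatenation of $[g^{-1}.v,g^{-1}.p]$, the axis segment $[g^{-1}.p,g.p]$, and $[g.p,g.v]$ is geodesic: by the gate property each outer segment meets $A_g$ only at its endpoint on the axis, so no backtracking occurs where it joins the axis segment (the degenerate case $v\in A_g$ being checked directly). Therefore $d(g^{-1}.v,g.v)=2\,d(v,A_g)+2m_g$, while $d(v,g.v)=2\,d(v,A_g)+m_g$ by Proposition~\ref{prop:hyperbolic}; since $m_g>0$, the former strictly exceeds the latter.

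The only places that need genuine care are in the hyperbolic case: verifying that the three-piece path is truly a geodesic (this is where the gate property of projection onto a convex subtree, and the separate treatment of $v\in A_g$, enter), and the bookkeeping of the subdivision reduction used to dispose of inversions. Beyond those points the argument is just the triangle inequality together with Propositions~\ref{prop:elliptic} and~\ref{prop:hyperbolic}.
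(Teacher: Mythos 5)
Your proof is correct. The paper states this corollary without proof, listing it among the classical facts that follow from Propositions~\ref{prop:elliptic} and~\ref{prop:hyperbolic}; your derivation---computing $d(g^{-1}.v,g.v)$ via nearest-point projection to $T_g$ or $A_g$ (with the barycentric subdivision handling inversions) and then invoking the elliptic/inversion/hyperbolic trichotomy to upgrade the two implications to biconditionals---is exactly the standard argument the authors intend.
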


\begin{cor}\label{cor:aut_impl}
If $g$ is elliptic and $u$ is the midpoint of $[v,g.v]$, then $T_g\cap [v,g.v]=\{u\}$. If $g$ is hyperbolic and $w\in[v,g.v]$ satisfies $d(v,w)=d(v,g.v)-\frac12d(g^{-1}.v,g.v)$, then $w\in A_g$ and
\begin{equation}
m_g = d(g^{-1}.v,g.v)-d(v,g.v).
\end{equation}
\end{cor}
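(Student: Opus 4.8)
The plan is to derive both halves of the corollary from a single structural observation about nearest-point projection in a tree, combined with the displacement identities of \Cref{prop:elliptic,prop:hyperbolic}. The ingredient I would set up first is the following: if $Y\subseteq X$ is a nonempty subtree, then every vertex $x$ has a unique nearest point $\pi_Y(x)\in Y$, the geodesic $[x,\pi_Y(x)]$ meets $Y$ only at $\pi_Y(x)$, and $\pi_Y$ is equivariant under any automorphism preserving $Y$. From this one gets the basic splicing principle used everywhere below: if $x\notin Y$, $y\in Y$, and $z=\pi_Y(x)$, then $[x,z]\cup[z,y]$ is again a geodesic, since at $z$ one segment leaves along an edge of $Y$ and the other along an edge not in $Y$; and more generally $[x,z]\cup g[z,x']$ is a geodesic whenever $g$ preserves $Y$ and is not an inversion, because an edge fixed by $g$ would have both its endpoints in the fixed set of $g$, contradicting $[x,z]\cap Y=\{z\}$.

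For the elliptic statement I would take $Y=T_g$ and $p=\pi_{T_g}(v)$, so that $g.p=p$ and $d(v,p)=d(v,T_g)=\tfrac12 d(v,g.v)$ by \Cref{prop:elliptic}. Splicing $[v,p]$ with $g[v,p]=[p,g.v]$ produces a path of length $2d(v,T_g)=d(v,g.v)$, hence the geodesic $[v,g.v]$, whose midpoint is therefore $p$; this gives $u=p\in T_g$. Uniqueness follows immediately: any $q\in T_g\cap[v,g.v]$ lies on $[v,u]$ or on $[u,g.v]$, and $d(v,T_g)\le d(v,q)\le d(v,u)=d(v,T_g)$ (respectively the symmetric estimate at $g.v$, using $d(g.v,T_g)=d(v,T_g)$) forces $q=u$.

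For the hyperbolic statement I would take $Y=A_g$ and $p=\pi_{A_g}(v)$. Equivariance gives $\pi_{A_g}(g^{k}.v)=g^{k}.p$ for all $k$, and since $g$ acts on $A_g$ as translation by $m_g$, the points $g^{-1}.p,\,p,\,g.p$ are consecutive along $A_g$ with successive distances $m_g$. Writing $D=d(v,A_g)$ and splicing, I obtain $[v,g.v]=[v,p]\cup[p,g.p]\cup[g.p,g.v]$ of length $2D+m_g$ (which re-derives \Cref{prop:hyperbolic}), and $[g^{-1}.v,g.v]=[g^{-1}.v,g^{-1}.p]\cup[g^{-1}.p,g.p]\cup[g.p,g.v]$ of length $2D+2m_g$; subtracting yields $m_g=d(g^{-1}.v,g.v)-d(v,g.v)$. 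The hypothesis on $w$ then reads $d(v,w)=(2D+m_g)-\tfrac12(2D+2m_g)=D$, so $w$ is the point of $[v,g.v]$ at distance $D$ from $v$, which by the decomposition above is exactly $p\in A_g$; uniqueness of points at a prescribed distance along a geodesic gives $w=p$.

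The only genuinely delicate point---the main obstacle, such as it is---is the splicing/no-backtracking verification at the projection vertices, i.e.\ checking that the two outgoing edges at $p$ (or at $g.p$) are distinct; in the elliptic case this is where the exclusion of inversions in \Cref{def:aut-types} is used, while in the hyperbolic case it reduces to equivariance of $\pi_{A_g}$ together with the fact that $[p,g.p]$ genuinely lies inside the line $A_g$. One should also dispatch the degenerate configurations $v\in T_g$ and $v\in A_g$ separately, where $[v,g.v]$ is a single vertex, respectively a subsegment of the axis, and the assertions are trivial. Everything after the splicing lemma is then bookkeeping with the two displacement formulas.
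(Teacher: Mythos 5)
Your proof is correct and complete: the nearest-point-projection/splicing argument, with the inversion-free hypothesis invoked exactly where it is needed (to rule out backtracking at the projection point in the elliptic case), is the standard derivation of both halves of the corollary from Propositions~\ref{prop:elliptic} and~\ref{prop:hyperbolic}. The paper itself states this corollary without proof, citing Serre, so there is no alternative argument to compare against; your write-up simply supplies the omitted details, and does so accurately.
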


\begin{defn}\label{def:discrete}
A subgroup $G<\aut(X)$ is \emph{discrete} if there is a finite subtree $T\subset X$ such that the pointwise stabilizer
\begin{equation}
G_T=\{g\in G\mid g.v=v\ \forall\, v\in\mathcal V(T)\}
\end{equation}
is trivial.
\end{defn}

\begin{rmk}
If $G<\aut(X)$ is finitely generated and discrete, then all vertex stabilizers $G_v$ are finite and uniformly bounded in size. Throughout we assume all groups are inversion-free and discrete unless noted.
\end{rmk}

\subsection{Nielsen Moves and Length Reduction}\label{subsec:nielsen}

Let $\Gamma=\langle\gamma_1,\gamma_2\rangle<\aut(X)$ be generated by two hyperbolic automorphisms with translation lengths $m_1,m_2$. A \emph{Nielsen move} on the ordered pair $(\gamma_1,\gamma_2)$ is one of the transformations
\begin{equation}\label{equ:nielsen-moves}
(\gamma_1,\gamma_2)\mapsto(\gamma_1^{-1},\gamma_2),\quad
(\gamma_1,\gamma_2)\mapsto(\gamma_2,\gamma_1),\quad
(\gamma_1,\gamma_2)\mapsto(\gamma_1\gamma_2,\gamma_2),
\end{equation}
or the inverse of one of these. Such moves generate $\mathrm{Aut}(F_2)$ and do not change $\Gamma$.

We apply these moves in a translation-length-minimizing fashion. Assign a lexicographic order to length pairs:
\begin{equation}
\label{eq:l_order}
(m_1,m_2)<(m_1',m_2')\quad\text{if }m_1<m_1'\text{ or }(m_1=m_1'\ \text{and}\ m_2<m_2').
\end{equation}
A Nielsen move is \emph{length-reducing} if it replaces $(\gamma_1,\gamma_2)$ with $(\gamma_1',\gamma_2')$ such that
\begin{equation}
\label{eq:n_l_reducing}
(m_1',m_2')\le_{\mathrm{lex}}(m_1,m_2),\qquad\text{and often }(m_1',m_2')<(m_1,m_2).
\end{equation}

Iterating length-reducing moves produces a finite sequence
\begin{equation}
(\gamma_1,\gamma_2)\to(\gamma_1^{(1)},\gamma_2^{(1)})\to\cdots\to(\gamma_1^{(k)},\gamma_2^{(k)}),
\end{equation}
terminating when no further reduction applies. This process resembles the Euclidean algorithm (see Section~\ref{sec:schottky-criterion}) and controls the geometry of axes intersections. The geometric consequences of this reduction are developed in Section~\ref{sec:schottky-criterion}, and form the basis for Theorems~\ref{thm:main} and \ref{thm:weighted}. \medskip

\begin{rmk}
In contrast to $\mathrm{PSL}(2,\mathbb{R})$, where trace identities guide length minimization, here the reduction is entirely combinatorial and encoded by translation lengths and the quantity $\ell(A_{\gamma_1}\cap A_{\gamma_2})$.
\end{rmk}

Having established the basic notation and the role of Nielsen transformations in reducing generating pairs, we now turn to the question posed in the Introduction: how the geometric data $(m_1,m_2,l)$ associated to a pair of hyperbolic automorphisms governs the structure of the subgroup $\Gamma = \langle \gamma_1, \gamma_2 \rangle$. In the next section, we develop a Euclidean-style reduction algorithm on geometric triples that leads to a complete classification of when $\Gamma$ is free of rank two and, in particular, when it admits a Schottky generating pair.

\section{Axis-Intersection Criteria and Geometric Triples}
\label{sec:schottky-criterion}
\smallskip

In this section we introduce the notion of a geometric triple associated to a pair of hyperbolic automorphisms and develop the length-reduction algorithm that underlies our main classification result. By analyzing successive Nielsen moves and tracing the evolution of translation lengths and intersection data, we reduce any generating pair to one of a few canonical configurations. This procedure culminates in Theorem~\ref{thm:main}, which gives an explicit geometric criterion for determining when such a pair generates a free, discrete subgroup and when it admits a Schottky generating pair. The proof combines combinatorial Nielsen theory with metric bounds derived from axes geometry in the tree.

To present a streamlined formulation of our answer to the question posed in the introduction, we first introduce the following normalization

\begin{itemize}
	\item Without loss of generality, assume $m_1 \geq m_2$.
	\item If the axes have at least one common edge, $\mathcal{E}(A_{\gamma_1}\cap A_{\gamma_2})\neq \varnothing$, let $[v^-,v^+]$ denote this intersection. In addition, we assume that both $\gamma_1$ and $\gamma_2$ translate from $v^-$ toward $v^+$ along $[v^-,v^+]$.
	\item If the axes do not share an edge, let $[v_1,v_2]$ be the unique (possibly degenerate) geodesic path connecting $A_{\gamma_1}$ and $A_{\gamma_2}$, such that $v_1\in A_{\gamma_1}$ and $v_2\in A_{\gamma_2}$. We adopt the convention that the axes are \emph{disjoint} in this scenario, even though they may meet at a vertex.
\end{itemize}
In this setting, the group $\Gamma = \langle \gamma_1, \gamma_2 \rangle$ is described by the translation lengths $m_1 = m_{\gamma_1}$, $m_2 = m_{\gamma_2}$, and the length $l = \ell(A_{\gamma_1}\cap A_{\gamma_2})$ of the intersection of the axes.  As we will show below, whether $\Gamma$ is free of rank $2$ is determined by these parameters, closely related to the continued fraction expansion of the ratio $\frac{m_1}{m_2}$.

\subsection{The Main Theorem}
To make our claim precise, we define two sequences inductively. The first is a sequence of positive integers $\{m_i\}_{i=1}^{k+1}$, which is essentially the Euclidean algorithm for $m_1$ and $m_2$. Start with $m_1, m_2$ (where $m_1 \geq m_2 > 0$). For $i \geq 1$, as long as $m_{i+1} > 0$, define:
\begin{equation}
\label{eq:Euc_algo}
q_i = \left\lfloor\frac{m_i}{m_{i+1}}\right\rfloor,\ m_{i+2} = m_i - q_im_{i+1}.
\end{equation}
This process terminates at the first index $k$ for which $m_{k+1} = 0$; note that $m_k = \gcd(m_1, m_2)$.

The second is a corresponding sequence of group elements $\{\gamma_i\}_{i=1}^{k+1}$ in $\Gamma$. Starting with the given $\gamma_1, \gamma_2$, define for $i \geq 1$:
\begin{equation}
\label{eq:group_ele_seq}
\gamma_{i+2} = \gamma_i \gamma_{i+1}^{-q_i}.
\end{equation}
The sequence $\{\gamma_i\}$ is thus defined for $i = 1, 2, \dots, k+1$.

We can now state our main classification theorem.

\begin{thm}\label{thm:main}
	Under the assumptions above, let 
	\begin{equation}
		\label{eq:first_axis}
	l \coloneqq \ell\big(A_{\gamma_1}\cap A_{\gamma_2}\big).
	\end{equation}
	Then $\Gamma=\langle\gamma_1,\gamma_2\rangle$ falls into exactly one of the cases (1)-(3) below:
	\begin{enumerate}[label=(\arabic*)]
		\item If $l=0$, then $\Gamma$ is free of rank two; there is a Schottky pair of generators $(\gamma_{\mathrm{a}},\gamma_{\mathrm{b}})$ whose axes have disjoint edges.
		\item If $0< l\leq m_1+m_2 - \gcd(m_1,m_2)$, there are unique integers $j,q$ with
		\begin{equation}
		\label{eq:Int_j_q}
		(m_1 + m_2) - m_j + (q-1)m_{j+1} < l \leq (m_1 + m_2) - m_j + q m_{j+1},\ 1 \leq j \leq k-1,\ 0 \leq q \leq q_j - 1.
		\end{equation}
	
		\begin{enumerate}[label=(\alph*)]
			\item If $l = (m_1+m_2) - m_j + qm_{j+1}$, then $\gamma_j\gamma_{j+1}^{-q}$ is hyperbolic; set
			\begin{equation}
			\label{eq:l_zero}
			l_0\coloneqq \ell(A_{\gamma_j\gamma_{j+1}^{-q}}\cap \gamma_{j+1}.A_{\gamma_j\gamma_{j+1}^{-q}}).
			\end{equation}
			\begin{enumerate}[label=\roman*)]
				\item If $l_0 \geq \frac{m_j - (q+1)m_{j+1}}{2}$ then $\Gamma$ is not free: it is generated by a hyperbolic element $\gamma_{\mathrm a}$ and an elliptic element $\gamma_{\mathrm b}$. The axis $A_{\gamma_{\mathrm a}}$ and fixed subtree $T_{\gamma_{\mathrm b}}$ are disjoint.
				
				\item If $l_0 < \frac{m_j - (q+1)m_{j+1}}{2}$ then $\Gamma$ is free of rank two: there is a Schottky pair $(\gamma_{\mathrm a},\gamma_{\mathrm b})$ whose axes  $A_{\gamma_{\mathrm a}}$ and $A_{\gamma_{\mathrm b}}$  are disjoint.
			\end{enumerate}
			\item If $l<(m_1+m_2)-m_j+qm_{j+1}$ then $\Gamma$ is free of rank two: there is a Schottky pair $(\gamma_{\mathrm a},\gamma_{\mathrm b})$ whose axes  $A_{\gamma_{\mathrm a}}$ and $A_{\gamma_{\mathrm b}}$ intersect with $v^+$ as an endpoint.
		\end{enumerate}
		\item If $l> m_1+m_2-\gcd(m_1,m_2)$, then $\Gamma$ is not free: it is generated by a hyperbolic element $\gamma_{\mathrm a}$ and an elliptic element $\gamma_{\mathrm b}$. 
		 The axis $A_{\gamma_{\mathrm a}}$ and fixed subtree $T_{\gamma_{\mathrm b}}$ 
		  intersect with $v^+$ as an endpoint are disjoint.
	\end{enumerate}
\end{thm}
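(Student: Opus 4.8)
The plan is to realize the sequences \eqref{eq:Euc_algo}--\eqref{eq:group_ele_seq} as the output of a geometric Euclidean algorithm driven by the length-reducing Nielsen moves of \S\ref{subsec:nielsen}, carrying along at each stage not just the translation lengths $(m_i,m_{i+1})$ but also the overlap datum $L_i:=\ell(A_{\gamma_i}\cap A_{\gamma_{i+1}})$ (with $\gamma_i,\gamma_{i+1}$ in the normalization of \S\ref{sec:schottky-criterion}), and to read the trichotomy off the stage at which the reduction first fails to return an honest hyperbolic pair with coherently overlapping axes. Two preliminary reductions are cheap. When $l=0$ the axes are disjoint (possibly meeting in one vertex), and a ping-pong argument on the boundary $\partial X$, with playing fields the half-trees cut off on each axis by the feet of the bridge $[v_1,v_2]$, shows $(\gamma_1,\gamma_2)$ is already a Schottky pair and $\Gamma$ is free of rank two; this is case~(1). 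For non-freeness we use only that $\Gamma$ is discrete and finitely generated, hence its vertex stabilizers are finite (Remark after Definition~\ref{def:discrete}): a nontrivial elliptic element then has finite order, so exhibiting one already forces $\Gamma$ not to be free. Thus in cases~(3) and 2(a)(i) the real content is to produce such an element and to recognize the hyperbolic/elliptic generating pair with $A_{\gamma_{\mathrm a}}\cap T_{\gamma_{\mathrm b}}=\varnothing$.

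The heart of the matter is a one-step lemma for a single move $(\gamma,\delta)\mapsto(\gamma\delta^{-1},\delta)$ on a pair of hyperbolic automorphisms with $m_\gamma\ge m_\delta$ whose axes overlap \emph{coherently} in a segment $[v^-,v^+]$ of length $\Delta$ (both translating $v^-\to v^+$). Using Corollaries~\ref{cor:aut_type} and~\ref{cor:aut_impl} and a direct computation of $d(v^+,\gamma\delta^{-1}.v^+)$ and $d(\delta\gamma^{-1}.v^+,\gamma\delta^{-1}.v^+)$, one shows: if $\Delta\ge m_\delta$ then $\gamma\delta^{-1}$ is again hyperbolic, of translation length $m_\gamma-m_\delta$, still overlapping $A_\delta$ coherently and keeping $v^+$ as the extreme point of the overlap, the new overlap length being exactly $\Delta-m_\delta$ (degenerating to $\{v^+\}$ when $\Delta=m_\delta$); whereas if $\Delta<m_\delta$ the move ceases to subtract and one is at a terminal configuration. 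Iterating this $q_i$ times and then swapping realizes one round $(\gamma_i,\gamma_{i+1})\to(\gamma_{i+1},\gamma_{i+2})$ of \eqref{eq:Euc_algo}--\eqref{eq:group_ele_seq} and reduces the overlap by $q_im_{i+1}$, so that $L_i=l-(m_1+m_2)+m_i+m_{i+1}$. Consequently the reduction reaches stage $j$ in good order precisely when $l$ lies in the $j$-th block of intervals of \eqref{eq:Int_j_q} or beyond, and a short computation pins down the unique $(j,q)$ at which the overlap first runs out in mid-stage --- after $q$ clean peels of $\gamma_{j+1}$ from $\gamma_j$ --- together with the residual overlap of the broken pair $(\eta,\gamma_{j+1})$, where $\eta:=\gamma_j\gamma_{j+1}^{-q}$.

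It remains to analyze the three outcomes at the obstruction. In the interior case $l<(m_1+m_2)-m_j+qm_{j+1}$ the residual overlap of $(\eta,\gamma_{j+1})$ is a coherent segment of length strictly between $0$ and $m_{j+1}$ with $v^+$ as an endpoint; since each translation length exceeds this overlap, a ping-pong argument across $v^+$ identifies $(\eta,\gamma_{j+1})$ as a Schottky pair with intersecting axes, giving case 2(b). In the boundary case $l=(m_1+m_2)-m_j+qm_{j+1}$ the residual overlap is exactly $m_{j+1}$, so the next peel $\gamma_j\gamma_{j+1}^{-(q+1)}$ is the degenerate one, and its type is governed by how $\gamma_{j+1}$ carries the $A_\eta$-branch at the lower endpoint of the residual segment --- precisely what $l_0=\ell(A_\eta\cap\gamma_{j+1}.A_\eta)$ detects. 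Comparing $l_0$ with $\tfrac12\bigl(m_j-(q+1)m_{j+1}\bigr)$ then separates case 2(a)(ii), where $\gamma_j\gamma_{j+1}^{-(q+1)}$ stays hyperbolic with axis crossing $A_{\gamma_{j+1}}$ transversally at $v^+$ and one extracts a Schottky pair with disjoint axes, from case 2(a)(i), where a nontrivial finite-order elliptic element appears (so $\Gamma$ is not free) and the hyperbolic/elliptic pair with disjoint axis and fixed subtree falls out of the local picture at $v^+$. Finally, when $l>m_1+m_2-\gcd(m_1,m_2)$ the overlap never runs out in mid-stage, so the algorithm runs to completion and returns $\gamma_{k+1}$ of translation length $m_{k+1}=0$ --- a nontrivial elliptic element whose fixed subtree lies at positive distance from $A_{\gamma_k}$ --- which is case~(3).

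The step I expect to be the main obstacle is the bookkeeping in the one-step lemma: checking that the coherent-overlap configuration, including the persistence of the landmark vertex $v^+$ through every Nielsen move, is genuinely preserved, and extracting the exact affine law $\Delta\mapsto\Delta-m_\delta$ so that the thresholds in \eqref{eq:Int_j_q} come out on the nose; and, bound up with it, the delicate local analysis at $v^+$ in the boundary case that turns the single integer $l_0$ into the sharp dividing line $\tfrac12\bigl(m_j-(q+1)m_{j+1}\bigr)$ between a finite-order elliptic obstruction and a concealed Schottky pair.
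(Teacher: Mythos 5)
Your strategy is essentially the paper's: the same geometric Euclidean algorithm on triples $(l,m_1,m_2)$, the same one-step Nielsen lemma (Proposition~\ref{prop:reduce_ii}), the same affine invariant $l^{(n)}-m_1^{(n)}-m_2^{(n)}=l-m_1-m_2$, the same $l_0$-dichotomy at the degenerate step (Proposition~\ref{prop:reduce_iii}), and the same terminal elliptic element when the overlap outlasts the whole algorithm (Proposition~\ref{prop:reduce_iv}). However, two concrete points need repair. First, your one-step lemma is false as stated at the boundary $\Delta=m_\delta$: you assert $\gamma\delta^{-1}$ is ``again hyperbolic, of translation length $m_\gamma-m_\delta$,'' with overlap degenerating to $\{v^+\}$, but this is precisely the configuration in which $\gamma\delta^{-1}$ can be elliptic (when $l_0\ge\tfrac12(m_\gamma-m_\delta)$) and, when hyperbolic, has translation length $m_\gamma-m_\delta-2l_0$ with axis at distance $l_0$ from $A_\delta$ --- exactly what your own later boundary-case analysis, and the paper's Proposition~\ref{prop:reduce_iii}, require. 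The clean law $\Delta\mapsto\Delta-m_\delta$ must be restricted to $\Delta>m_\delta$ (the paper's hypothesis that the candidate triple lands in case (I) or (II)); this costs nothing, since the algorithm only meets $\Delta=m_\delta$ at its final step, but as written the lemma contradicts case 2(a)(i). Relatedly, in case 2(a)(ii) the new axis does not ``cross $A_{\gamma_{j+1}}$ transversally at $v^+$'' unless $l_0=0$; for $l_0>0$ it is disjoint from $A_{\gamma_{j+1}}$ at distance $l_0$.

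Second, your description of case (3) is wrong: you claim the fixed subtree of $\gamma_{k+1}$ ``lies at positive distance from $A_{\gamma_k}$,'' whereas $l>m_1+m_2-m_k$ forces $\ell(A_{\gamma_k}\cap T_{\gamma_{k+1}})=l-m_1-m_2+m_k>0$ by Proposition~\ref{prop:reduce_iv}: the axis and the fixed tree overlap in a nondegenerate segment with $v^+$ as an endpoint (the ``meeting axis-tree'' row of Table~\ref{tab:main-cases}). The disjoint axis--tree configuration, at distance $\tfrac12\bigl(m_j-(q+1)m_{j+1}\bigr)$, belongs to case 2(a)(i), not to case (3). Your non-freeness conclusion survives either way (a nontrivial elliptic element of a discrete group has finite order, so $\Gamma$ is not free of rank two), but the intersection data is part of what the theorem asserts, and your argument proves the wrong geometric picture there.
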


The generating pair $(\gamma_{\mathrm a},\gamma_{\mathrm b})$, their translation lengths, 
and the relevant intersection-length / minimum-distance data are 
summarized in Table~\ref{tab:main-cases} below.
	
	\begin{table}[H]
		\centering
		
			\small
			\begin{tabular}{|c||c|c|c|c|c|}
				\hline
				Case & Generators $(\gamma_{\mathrm{a}},\gamma_{\mathrm{b}})$ & $m_{\mathrm{a}}$ & $m_{\mathrm{b}}$ & Relation & \begin{tabular}[c]{@{}c@{}}$\ell(A_{\gamma_{\mathrm{a}}}\cap A_{\gamma_{\mathrm{b}}})$, $d(A_{\gamma_{\mathrm{a}}}, A_{\gamma_{\mathrm{b}}})$,\\ $\ell(A_{\gamma_{\mathrm{a}}}\cap T_{\gamma_{\mathrm{b}}})$, or $d(A_{\gamma_{\mathrm{a}}}, T_{\gamma_{\mathrm{b}}})$ \end{tabular} \\
				\hline\hline
				(1) & $(\gamma_1,\gamma_2)$ & $m_1$ & $m_2$ & Disjoint axis-axis & $\geq 0$ \\
				\hline
				(2a)(i) & \multirow{2}{*}{$(\gamma_{j+1},\gamma_j \gamma_{j+1}^{-q-1})$} & \multirow{3}{*}{$m_{j+1}$} & Elliptic & Disjoint axis-tree & $(m_j - (q+1)m_{j+1})/2$ \\
				\cline{1-1}\cline{4-6} 
				(2a)(ii) &  &  & \begin{tabular}[c]{@{}c@{}}$m_j - (q+1)m_{j+1} $\\ $- 2l_0$ \end{tabular} & Disjoint axis-axis & $l_0$ \\
				\cline{1-2}\cline{4-6} 
				(2b) & $(\gamma_{j+1},\gamma_j \gamma_{j+1}^{-q})$ & & $m_j - qm_{j+1}$ & Meeting axis-axis & \begin{tabular}[c]{@{}c@{}}$l - m_1 - m_2$\\ $+ m_j - (q-1)m_{j+1}$ \end{tabular} \\
				\hline
				(3) & $(\gamma_k,\gamma_{k+1})$ & $m_k$ & Elliptic & Meeting axis-tree & $l - m_1 - m_2 + m_k$ \\
				\hline
			\end{tabular}
			
		\caption{Case summary for $\Gamma=\langle\gamma_1,\gamma_2\rangle$. The entry `elliptic' in the $m_{\mathrm b}$ column indicates $\gamma_{\mathrm b}$ is elliptic and does not have a translation length.}
		\label{tab:main-cases}
	\end{table}

\begin{rmk}[Tree-Version of the Shimizu Criterion]\label{rmk:shimizu-criterion}
	The axis-overlap threshold stated in Theorem~\ref{thm:main} refines a classical qualitative principle: that two hyperbolic automorphisms of a tree generate a free group provided their axes do not overlap ``too much.'' A precise formulation of this principle appears as a consequence of Tits' ping-pong arguments in Serre's exposition~\cite{serre2002trees}: if $g,h \in \aut(X)$ are hyperbolic and
	\begin{equation}
		\label{eq:serre-shimizu}
		\ell(A_g \cap A_h) < \min\{\ell(g), \ell(h)\},
	\end{equation}
	then $\langle g, h \rangle$ is a free rank-two subgroup of $\aut(X)$. This may be viewed as a tree-theoretic analog of the classical axis-separation criteria for Schottky-type subgroups in Kleinian group theory (often discussed under the heading of a Shimizu lemma, not to be confused with Shimizu's inequality for parabolic elements). Our Theorem~\ref{thm:main} sharpens this picture in the simplicial setting: once the continued-fraction reduction $\{\gamma_i\}$ of~\eqref{eq:group_ele_seq} is taken into account, the critical threshold for the original pair $(\gamma_1,\gamma_2)$ turns out to be the precise value
	\begin{equation}
		\label{eq:main-threshold}
		\ell(A_{\gamma_1}\cap A_{\gamma_2}) = m_1 + m_2 - \gcd(m_1,m_2),
	\end{equation}
	with elliptic behavior forced exactly when this bound is exceeded. Thus, the present result furnishes a quantitative completion of the classical axis-overlap criterion for rank-two free subgroups in $\aut(X)$.
\end{rmk}

\subsection{Geometric Triples for Generating Pairs} Before proving the theorem, we establish some further notions for a clearer description.
\begin{defn}
	Let $X$ be a tree and let $\gamma_1, \gamma_2\in \aut(X)$ be hyperbolic automorphisms that generate a discrete subgroup. Let $m_1, m_2 > 0$ be their translation lengths, and let $l = \ell(A_{\gamma_1}\cap A_{\gamma_2})$ be the length of the intersection of their axes. The triple $(l, m_1, m_2)$ is called the \emph{geometric triple} of the generating pair $(\gamma_1, \gamma_2)$.
\end{defn}
If we assume $m_1\geq m_2$, the definition guarantees that the geometric triple $(l, m_1, m_2)$ of a hyperbolic generating pair satisfies exactly one of the following three conditions:
\begin{enumerate}[label=(\Roman*)]
	\item $l\geq m_2$.
	\item $0<l<m_2$.
	\item $l = 0$.
\end{enumerate}

Having established the possible cases for the geometric triple, we now prove that 
the group $\Gamma = \langle \gamma_1,\gamma_2\rangle$ is automatically Schottky in 
two of the scenarios.

\begin{prop}[cf. \cite{conder2020discrete}, Proposition 3.4]\label{prop:schottky}
	Let $(\gamma_1,\gamma_2)$ be a pair of hyperbolic automorphisms with translation 
	lengths $m_1\geq m_2$, and suppose 
	its geometric triple $(l,m_1,m_2)$ falls 
	in either case (II) or (III). 
	Then, $\Gamma = \langle \gamma_1,\gamma_2\rangle$ is 
	free and discrete, and $(\gamma_1,\gamma_2)$ is 
	a pair of Schottky generators.
\end{prop}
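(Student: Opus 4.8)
The plan is to run Tits' ping-pong directly on $X$; a single ping-pong chain will simultaneously give freeness, the Schottky structure, and discreteness. The first move is pure bookkeeping: since $m_1\ge m_2$, being in case (II) or (III) is exactly the statement
\[
l=\ell\big(A_{\gamma_1}\cap A_{\gamma_2}\big)<m_2=\min\{m_1,m_2\},
\]
which is the quantitative hypothesis behind the Serre--Tits overlap criterion \eqref{eq:serre-shimizu}. I then fix the two vertices at which $X$ will be cut: in case (II) the endpoints $v^-,v^+$ of the overlap $A_{\gamma_1}\cap A_{\gamma_2}$ (distinct, since $l>0$); in case (III) the endpoints $v_1\in A_{\gamma_1}$ and $v_2\in A_{\gamma_2}$ of the bridge joining the axes (allowing $v_1=v_2$).

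Next I define, for $i=1,2$, half-trees $H_i^{+},H_i^{-}\subset X$: $H_1^{+}$ is the connected component of $X$ minus the downstream branch vertex ($v^{+}$ in case (II), $v_1$ in case (III)) that contains the ray of $A_{\gamma_1}$ toward $\gamma_1^{+\infty}$, and $H_1^{-}$ is the component of $X$ minus the upstream branch vertex ($v^{-}$, resp.\ $v_1$) that contains the ray toward $\gamma_1^{-\infty}$; symmetrically $H_2^{\pm}$ using $v^{\mp}$ (resp.\ $v_2$). The core combinatorial step — and the one I expect to require the most care — is to check that $H_1^{+},H_1^{-},H_2^{+},H_2^{-}$ are pairwise disjoint. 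That $H_i^{+}\cap H_i^{-}=\varnothing$ is immediate, the two removed vertices being distinct (or, when $v_1=v_2$, the four axis-edges at that vertex being distinct because $A_{\gamma_1}\cap A_{\gamma_2}$ contains no edge). Disjointness of $H_1^{\pm}$ from $H_2^{\pm}$ is where $l<m_2$ enters: because the overlap of the axes is \emph{precisely} $[v^-,v^+]$, the ``upstream-$\gamma_1$'', ``upstream-$\gamma_2$'', and ``toward $v^{+}$'' edges issuing from $v^{-}$ are three distinct edges — a coincidence there would either lengthen the overlap or contradict the normalization that $\gamma_1,\gamma_2$ both translate from $v^{-}$ toward $v^{+}$ — and likewise at $v^{+}$; hence the four half-trees lie in pairwise distinct components. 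In case (III) the same conclusion follows from the axes sharing no edge.

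With the disjoint half-trees in hand, I verify the ping-pong inclusions $\gamma_i^{n}\big(X\setminus H_i^{-}\big)\subseteq H_i^{+}$ for $n\ge1$ and $\gamma_i^{n}\big(X\setminus H_i^{+}\big)\subseteq H_i^{-}$ for $n\le-1$, for $i=1,2$. This is where Proposition~\ref{prop:hyperbolic} does the work: parametrizing $A_{\gamma_i}$ so that $\gamma_i$ acts by translation by $m_i$ with the upstream branch vertex at coordinate $0$ and the downstream one at coordinate $l$, equivariance of the nearest-point projection $\pi_i:X\to A_{\gamma_i}$ gives $\pi_i(\gamma_i^{n}x)=\pi_i(x)+nm_i$; any $x\notin H_i^{-}$ has $\pi_i(x)\ge0$, so for $n\ge1$ one gets $\pi_i(\gamma_i^{n}x)\ge m_i>l$, which forces $\gamma_i^{n}x\in H_i^{+}$, and symmetrically for $n\le-1$. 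Here case (II) uses $l<m_1$ and $l<m_2$, whereas in case (III) only $m_1,m_2>0$ is needed.

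Finally, pick a ``core'' vertex $v_0$ lying in none of the four half-trees (for instance $v^{-}$ in case (II), $v_1$ in case (III); that such a vertex avoids all four half-trees is a short check). Given any nonempty reduced word $g=\gamma_{i_1}^{a_1}\cdots\gamma_{i_k}^{a_k}$ in $\gamma_1,\gamma_2$, apply the inclusions syllable by syllable from the right: $\gamma_{i_k}^{a_k}v_0$ lands in $H_{i_k}^{+}$ or $H_{i_k}^{-}$ according to the sign of $a_k$, and since consecutive syllables use different generators, each subsequent intermediate point avoids both half-trees of the next generator, so the chain continues; the outcome $g\cdot v_0$ lies in $H_{i_1}^{+}$ or $H_{i_1}^{-}$, hence is $\ne v_0$. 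Therefore $g\ne1$, so $\Gamma$ is free of rank two on $(\gamma_1,\gamma_2)$ and the $H_i^{\pm}$ exhibit $(\gamma_1,\gamma_2)$ as a Schottky pair, and moreover $g\cdot v_0\ne v_0$, so the pointwise stabilizer of the one-vertex subtree $\{v_0\}$ is trivial, giving discreteness in the sense of Definition~\ref{def:discrete}. The only nonroutine ingredient is the pairwise-disjointness verification of the second paragraph; everything else is the standard ping-pong bookkeeping together with the displacement formula.
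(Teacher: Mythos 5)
Your proof is correct and takes essentially the same route as the paper: an explicit ping--pong argument with four pairwise disjoint half-trees read off from the endpoints of the axis overlap (resp.\ the bridge $[v_1,v_2]$), with the inequality $l<m_2\le m_1$ driving the translation inclusions and the maximality of the overlap giving disjointness; the paper merely cuts along the edges $e_i^-$ and $e_i^+=\gamma_i.e_i^-$ instead of at the vertices and packages the complement as a fundamental domain $\mathcal Y$. One slip to fix: ``symmetrically $H_2^{\pm}$ using $v^{\mp}$'' should read $v^{\pm}$, since under the stated normalization both generators translate from $v^-$ toward $v^+$ (your disjointness discussion and the projection estimate $\pi_i(\gamma_i^n x)\ge m_i>l$ already assume this corrected assignment).
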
                                                                                                                               
\begin{proof}
	We proceed by constructing explicit fundamental domains for the action 
	of $\Gamma$ on $X$.\newline\newline
\textbf{Case (II):} $0 < l < m_2$. 
	Recall that we denoted the intersection of the axes by $[v^-,v^+]$, and assumed without loss of generality that both $\gamma_1$ and $\gamma_2$ translate from $v^-$ toward $v^+$. We denote four edges as follows (see Fig.~\ref{fig:case_ii}):
	\begin{itemize}
		\item For $i=1,2$, let $e_i^-$ be the edge in $A_{\gamma_i} \setminus [v^-,v^+]$ that is adjacent to $v^-$.
		\item For $i=1,2$, define $e_i^+ = \gamma_i. e_i^-$.
	\end{itemize}
	\begin{figure}[H]
	    \centering
	    \includegraphics[scale=1.5]{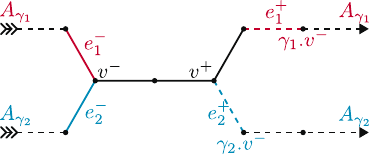}
	    \caption{The fundamental domain in case (II), where we set $l=2$, $m_1=3$, and $m_2=5$. Irrelevant vertices and edges are omitted. Edges not in the fundamental domain are dashed.}
	    \label{fig:case_ii}
	\end{figure}
	
	\textbf{Case (III):} $l = 0$.
	In this scenario we denoted the unique (possibly degenerate) path connecting the two axes by $[v_1,v_2]$, where $v_i\in \mathcal{V}(A_{\gamma_i})$ for $i=1,2$. We denote the edges differently (see Fig.~\ref{fig:case_iii}):
	\begin{itemize}
		\item For $i=1,2$, let $e_i^-$ be the edge in $[\gamma_i^{-1}.v_i, v_i]$ adjacent to $v_i$.
		\item For $i=1,2$, define $e_i^+ = \gamma_i. e_i^-$.
	\end{itemize}
	\begin{figure}[H]
	    \centering
	    \includegraphics[scale=1.2]{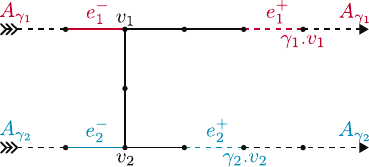}
	    \caption{The fundamental domain in case (III), where we set $m_1=2$, and $m_2=3$.}
	    \label{fig:case_iii}
	\end{figure}
	
	In either case, the automorphism $\gamma_i$ translates $e_i^-$ to $e_i^+$, and the four edges $e_i^+$ and $e_i^-$ are disjoint from the edge sets $\mathcal{E}([v^+,v^-])$ or $\mathcal{E}([v_1,v_2])$. Define $H_i^+$ (and $H_i^-$) for $i=1,2$ to be the connected component of $X \setminus \{e_i^+\}$ ($X \setminus \{e_i^-\}$, respectively) that does not meet the specified path $[v^+,v^-]$ or $[v_1,v_2]$. Then the subtrees $H_i^\pm$ are non-empty and pairwise disjoint, satisfying the Ping-Pong condition:
	\begin{equation}
	\label{eq:ping_ponf_def}
	\gamma_i.(X\setminus H_i^-) = e_i^+\cup H_i^+,\quad \gamma_i^{-1}.(X\setminus H_i^+) = e_i^-\cup H_i^-,\quad i=1,2.
	\end{equation}
	Let $\mathcal{Y}$ be the connected set of vertices and edges, determined by its edge set
	\begin{equation}
	\label{eq:edge_set}
	\mathcal{E}(\mathcal{Y}) = \mathcal{E}(X) \setminus (\mathcal{E}(H_1^+\cup H_1^-\cup H_2^+\cup H_2^-)\cup \{e_1^+, e_2^+\}),
	\end{equation}
	and maximal subtree $Y_0\subset \mathcal{Y}$, where
	\begin{equation}
	\mathcal{E}(Y_0) = \mathcal{E}(X) \setminus (\mathcal{E}(H_1^+\cup H_1^-\cup H_2^+\cup H_2^-)\cup \{e_1^+, e_2^+,e_1^-, e_2^-\}).
 \end{equation}
	By the Ping-Pong Lemma, $\Gamma = \langle \gamma_1, \gamma_2 \rangle$ is a Schottky group with respect to generators $\gamma_1$ and $\gamma_2$ and the fundamental domain $\mathcal{Y}$. The group $\Gamma$ is thus discrete and free of rank $2$.
\end{proof}
If the geometric triple $(l, m_1, m_2)$ is in case (I), we apply a \emph{Nielsen transformation} to replace $(\gamma_1,\gamma_2)$ by a new generating pair $(\gamma_1\gamma_2^{-1}, \gamma_2)$, and consider an associated triple. Define the candidate triple algebraically as:
\begin{equation}
	\label{eq:triple}
(l', m_1', m_2) = (l - m_2, m_1 - m_2, m_2).
\end{equation}

This candidate triple $(l', m_1', m_2)$ necessarily satisfies one of the following conditions:
\begin{enumerate}[label=(\Roman*)]
	\item $l' \geq \min(m_1',m_2)$ and $m_1'>0$,
	\item $l' < \min(m_1',m_2)$ and $l' > 0$,
	\item $m_1' > 0$ and $l' = 0$, or
	\item $m_1' = 0$.
\end{enumerate}
We note that the first three cases are analogous to our original classification, possibly after swapping $m_1'$ with $m_2$. The case (IV) is new. 

Under specific conditions, this candidate triple indeed realizes itself as the geometric triple for the new generating pair.
\begin{prop}[cf. \cite{conder2020discrete}, Proposition 3.5, Case (2)(ii)]\label{prop:reduce_ii}
	Suppose the geometric triple $(l, m_1, m_2)$ is in case (I), and its associated candidate triple $(l', m_1', m_2)$ is in case (I) or (II). Then, the element $\gamma_1\gamma_2^{-1}$ is hyperbolic. In addition, the candidate triple $(l', m_1', m_2)$ is the geometric triple of the new generating pair $(\gamma_1\gamma_2^{-1}, \gamma_2)$, with $v^+\in \mathcal{V}(A_{\gamma_2}\cap A_{\gamma_1\gamma_2^{-1}})$ as an endpoint.
\end{prop}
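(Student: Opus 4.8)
The plan is to coordinatize the overlap. Parametrize $[v^-,v^+]$ by positions $0,1,\dots,l$, with $v^-$ at position $0$ and $v^+$ at position $l$, compatibly along $A_{\gamma_1}$ and $A_{\gamma_2}$, so that each $\gamma_i$ acts on its own axis by sending the point at position $t$ to the point at position $t+m_i$; extend this position coordinate to all of $A_{\gamma_1}$. The hypothesis that the candidate triple $(l',m_1',m_2)$ falls in case (I) or (II) is equivalent to $m_1'=m_1-m_2>0$ and $l'=l-m_2>0$; in particular $0<m_2<l$, so $p:=\gamma_2.v^-$ is the \emph{interior} overlap vertex sitting at position $m_2$.

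The one geometric input is a projection lemma. Because $A_{\gamma_1}\cap A_{\gamma_2}=[v^-,v^+]$ \emph{exactly}, the two axes branch apart at $v^-$ and at $v^+$ (a shared edge just outside the overlap would enlarge the intersection). Consequently the nearest-point projection onto $A_{\gamma_2}$ of any point of $A_{\gamma_1}$ lying strictly beyond $v^-$ (respectively, strictly beyond $v^+$) is $v^-$ (respectively, $v^+$); hence $\gamma_2$ carries such a point to one hanging off $A_{\gamma_2}$ at $\gamma_2.v^-=p$ (respectively, at $\gamma_2.v^+$) along an edge not on $A_{\gamma_2}$ --- and, since $p$ is interior, along an edge that is not on $A_{\gamma_1}$ either. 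Writing $g:=\gamma_1\gamma_2^{-1}$, this yields the following uniform computations (using the test vertex $p$ rather than $v^+$ is what makes them free of any case split on whether $l\gtrless m_1$): $g.v^+$ is the point of $A_{\gamma_1}$ at position $l+m_1'$, so $d(v^+,g.v^+)=m_1'$; $g.p=\gamma_1.v^-$ is the point of $A_{\gamma_1}$ at position $m_1$, so $d(p,g.p)=m_1'$; and $g^{-1}.p=\gamma_2\gamma_1^{-1}.p$ hangs off $p$ at distance $m_1'$, whence $d(g^{-1}.p,g.p)=2m_1'$ (the connecting geodesic reaches $p$ through the hang-off edge and then runs along $A_{\gamma_1}$ from position $m_2$ to position $m_1$, with no backtracking because the hang-off edge lies off $A_{\gamma_1}$).

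Now Corollary~\ref{cor:aut_type} shows $g$ is hyperbolic, since $2m_1'>m_1'$ as $m_1'>0$; and Corollary~\ref{cor:aut_impl}, applied with test vertex $p$, gives $m_g=2m_1'-m_1'=m_1'$ together with $p\in A_g$. Since $d(v^+,g.v^+)=m_1'=m_g$, Proposition~\ref{prop:hyperbolic} forces $v^+\in A_g$ as well. Hence the geodesic $[p,v^+]$ --- the position-$m_2$ to position-$l$ subsegment of the overlap, which lies on $A_{\gamma_2}$ --- is contained in $A_g\cap A_{\gamma_2}$; it has length $l-m_2=l'>0$ and has $v^+$ as an endpoint. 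For the reverse inclusion I use that the intersection of two bi-infinite geodesics in a tree is a connected subsegment, so it suffices to check that $A_g$ leaves $A_{\gamma_2}$ at both ends of $[p,v^+]$: at $v^+$ the edge of $A_g$ not pointing toward $p$ is the initial edge of $[v^+,g.v^+]$, which runs along $A_{\gamma_1}$ beyond $v^+$ and hence is off $A_{\gamma_2}$; at $p$ the edge of $A_g$ not pointing toward $v^+$ is the initial edge of $[p,g^{-1}.p]$, which is the hang-off edge of the projection lemma and hence is off $A_{\gamma_2}$. Therefore $A_g\cap A_{\gamma_2}=[p,v^+]$, so the geometric triple of the pair $(\gamma_1\gamma_2^{-1},\gamma_2)$ --- discrete because it equals $\Gamma$ --- is $\big(\ell(A_g\cap A_{\gamma_2}),\,m_g,\,m_2\big)=(l',m_1',m_2)$, with $v^+\in\mathcal V(A_{\gamma_2}\cap A_{\gamma_1\gamma_2^{-1}})$ an endpoint, as claimed.

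I expect the delicate part to be the edge bookkeeping: proving the projection lemma, and then at each endpoint of $[p,v^+]$ pinning down exactly which edge of $A_g$ continues outward and verifying that it is off $A_{\gamma_2}$. Both steps rely on $A_{\gamma_1}\cap A_{\gamma_2}$ being \emph{exactly} $[v^-,v^+]$ (so the axes branch at each end) and on $0<m_2<l$ (so $p$ is an interior overlap vertex and the edge along which $g^{-1}.p$ hangs off $p$ is a genuine third edge, on neither axis --- which is precisely what makes $d(g^{-1}.p,g.p)$ equal $2m_1'$ rather than something smaller). Everything else is a direct application of Corollaries~\ref{cor:aut_type} and~\ref{cor:aut_impl}.
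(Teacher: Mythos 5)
Your proof is correct, and while it runs on the same engine as the paper's --- compute displacements of a well-chosen test vertex, invoke Corollaries~\ref{cor:aut_type} and~\ref{cor:aut_impl} to get hyperbolicity and the translation length $m_1'=m_1-m_2$, then pin down the two endpoints of $A_{\gamma_1\gamma_2^{-1}}\cap A_{\gamma_2}$ --- the execution differs in a way worth noting. The paper tests at $v^+$, working with the path $P=[\gamma_1^{-1}.v^+,\gamma_2^{-1}.v^+]$ and its images under $\gamma_1$ and $\gamma_2$, and then identifies the second endpoint $\gamma_2.v^-$ by a separate argument involving the smallest $j$ with $j(m_1-m_2)>l-m_1$ and a translation of the resulting configuration by $\gamma_2$. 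You instead test at $p=\gamma_2.v^-$ itself: your projection lemma gives $d(g^{-1}.p,g.p)=2m_1'$ and $d(p,g.p)=m_1'$ cleanly, Corollary~\ref{cor:aut_impl} then hands you $p\in A_g$ for free (the point $w$ it produces is $p$ itself), and $v^+\in A_g$ follows from $d(v^+,g.v^+)=m_1'=m_g$ via Proposition~\ref{prop:hyperbolic}. This replaces the paper's iterate argument by purely local edge bookkeeping at the two endpoints of $[p,v^+]$, and, as you observe, avoids any case split on $l$ versus $m_1$. The one point you should make fully explicit if writing this up is the determination of the translation direction of $g$ along $A_g$ at $p$ (namely that $g.p$ lies on the $v^+$-side of $p$, so $g^{-1}.p$ lies on the opposite side); this is immediate from your position coordinates ($g.p$ sits at position $m_1>m_2$), and it is what licenses the claim that the outward edge of $A_g$ at $p$ is the initial edge of $[p,g^{-1}.p]$ rather than an edge toward $v^+$. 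With that spelled out, your argument is complete and, if anything, somewhat leaner than the paper's treatment of the second endpoint.
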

\begin{proof}
	First we establish the hyperbolicity of $\gamma_1\gamma_2^{-1}$ with the predicted translation length. Refer to Fig.~\ref{fig:reduce_ii}, the inclusion $v^+\in \mathcal{V}(A_{\gamma_1} \cap A_{\gamma_2})$ implies that $\gamma_1^{-1}.v^+\in \mathcal{V}(A_{\gamma_1})$; by the assumption $l' = l-m_2>0$, $\gamma_2^{-1}.v^+\in \mathcal{V}(A_{\gamma_1})$ as well. Since $\gamma_1$ and $\gamma_2$ 
	translate toward the same direction, the distance between the latter two points is
	\begin{equation}
		\label{eq:dist_two_points}
	d(\gamma_1^{-1}.v^+,\gamma_2^{-1}.v^+) = d(v^+, \gamma_1^{-1}.v^+) - d(v^+, \gamma_2^{-1}.v^+) = m_1 - m_2.
	\end{equation}
	\begin{figure}[H]
	    \centering
	    \includegraphics[scale=1.25]{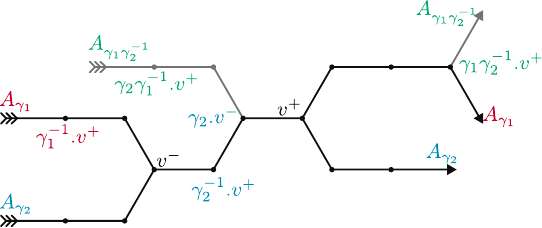}
	    \caption{Generator reduction from the geometric triple $(l,m_1,m_2) = (3,5,2)$ to $(l',m_1',m_2) = (1,3,2)$. This figure describes a reduction to case (II); a figure for case (I) is analogous and is left to the reader.}
	    \label{fig:reduce_ii}
	\end{figure}
	Denote the path $P = [\gamma_1^{-1}.v^+,\gamma_2^{-1}.v^+]$ and apply the automorphisms $\gamma_1$ and $\gamma_2$ to it:
	\begin{itemize}
		\item Since $P\subset A_{\gamma_1}$, the automorphism $\gamma_1$ translates $P$ to a path $[v^+, \gamma_1\gamma_2^{-1}.v^+]$. The length of this translated path is again $m_1-m_2$, and the path emanates from $v^+$ to $\gamma_1\gamma_2^{-1}.v^+$ in the positive direction along $A_{\gamma_1}$.
		\item Since $l - m_2 > 0$ is strict, the edge in $\mathcal{E}(P)$ adjacent to $\gamma_2^{-1}.v^+$ is contained in $A_{\gamma_2}$. The automorphism $\gamma_2$ translates $P$ to a path $[v^+, \gamma_2\gamma_1^{-1}.v^+]$. The length of this translated path is also $m_1-m_2$, while it emanates from $v^+$ in the negative direction along $A_{\gamma_1} \cap A_{\gamma_2}$.
	\end{itemize}
	This configuration implies that the points $\gamma_1\gamma_2^{-1}.v^+$ and $\gamma_2\gamma_1^{-1}.v^+$ lie on different directions initiating from $v^+$, specifically:
	\begin{equation}
		\label{eq:two_elements_push}
	d(\gamma_1\gamma_2^{-1}.v^+, \gamma_2\gamma_1^{-1}.v^+) = 2(m_1 - m_2) > m_1 - m_2 = d(v^+, \gamma_1\gamma_2^{-1}.v^+).
	\end{equation}
	By Corollaries \ref{cor:aut_type} and \ref{cor:aut_impl}, this inequality confirms that $\gamma_1\gamma_2^{-1}$ is hyperbolic with translation length $m_1 - m_2$. 
	
	Next, we derive the two endpoints of the intersection $A_{\gamma_2}\cap A_{\gamma_1\gamma_2^{-1}}$ to confirm the geometric triple and the inclusion $v^+\in \mathcal{V}(A_{\gamma_1\gamma_2^{-1}})$. We know from our first argument that the path $[v^+, \gamma_1\gamma_2^{-1}.v^+]\subset A_{\gamma_1\gamma_2^{-1}}$ and $[v^+, \gamma_1\gamma_2^{-1}.v^+]\cap A_{\gamma_2} = \{v^+\}$, establishing that $v^+$ is one endpoint of the intersection $A_{\gamma_2} \cap A_{\gamma_1\gamma_2^{-1}}$. To obtain the other endpoint, let $j \geq 0$ be the smallest natural number such that $j(m_1 - m_2) > l - m_1$. This condition, as illustrated in Fig.~\ref{fig:reduce_ii}, ensures that
	\begin{equation}
	\label{eq:into_ping_pong}
	\gamma_1^{-1}(\gamma_2\gamma_1^{-1})^j.v^+ \in \mathcal{V}(A_{\gamma_1} \setminus A_{\gamma_2}), \quad \text{while} \quad \gamma_1^{-1}(\gamma_2\gamma_1^{-1})^{j-1}.v^+ \in \mathcal{V}(A_{\gamma_1} \cap A_{\gamma_2}).
	\end{equation}
	(Note that for $j=0$, we interpret $\gamma_1^{-1}(\gamma_2\gamma_1^{-1})^{-1}.v^+$ as $\gamma_2^{-1}.v^+$, which lies in $A_{\gamma_1} \cap A_{\gamma_2}$.)
	
	Recall that $v^-$ is the other endpoint of $A_{\gamma_1} \cap A_{\gamma_2}$. The inclusions above imply that
	\begin{equation}
	[\gamma_1^{-1}(\gamma_2\gamma_1^{-1})^j.v^+, \gamma_1^{-1}(\gamma_2\gamma_1^{-1})^{j-1}.v^+]\cap A_{\gamma_2} = [v^-, \gamma_1^{-1}(\gamma_2\gamma_1^{-1})^{j-1}.v^+].
	\end{equation}
	Translating this configuration by $\gamma_2$ yields:
	\begin{equation}
	[(\gamma_2\gamma_1^{-1})^{j+1}.v^+, (\gamma_2\gamma_1^{-1})^{j}.v^+] \cap A_{\gamma_2} = [\gamma_2.v^-, (\gamma_2\gamma_1^{-1})^{j}.v^+].
	\end{equation}
	Note that $[(\gamma_2\gamma_1^{-1})^{j+1}.v^+, (\gamma_2\gamma_1^{-1})^{j}.v^+]\subset A_{\gamma_1\gamma_2^{-1}}$; the result above identifies $\gamma_2.v^-$ as the other endpoint of $A_{\gamma_2} \cap A_{\gamma_1\gamma_2^{-1}}$. Then, the length of this intersection is computed as
	\begin{equation}
	\ell(A_{\gamma_2} \cap A_{\gamma_1\gamma_2^{-1}}) = d(\gamma_2.v^-, v^+) = d(v^-, v^+) - d(v^-, \gamma_2.v^-) = l - m_2 = l'.
	\end{equation}
	This concludes our claim that the candidate triple $(l', m_1', m_2)$ is indeed the geometric triple for the generating pair $(\gamma_1\gamma_2^{-1}, \gamma_2)$.
\end{proof}
\begin{prop}[cf. \cite{conder2020discrete}, Proposition 3.5, Case (2)(iii)]\label{prop:reduce_iii}
	Suppose the geometric triple $(l, m_1, m_2)$ is in case (I) and its associated candidate triple $(l', m_1', m_2)$ is in case (III): $l' = 0$ and $m_1' > 0$. Let $l_0 = \ell(A_{\gamma_1}\cap \gamma_2.A_{\gamma_1})$. Then:
	\begin{itemize}
		\item If $l_0 \geq \frac{m_1 - m_2}{2}$, then $\gamma_1\gamma_2^{-1}$ is elliptic. Furthermore, the axis $A_{\gamma_2}$ and the fixed tree $T_{\gamma_1\gamma_2^{-1}}$ are disjoint, with $d(A_{\gamma_2}, T_{\gamma_1\gamma_2^{-1}}) = \frac{m_1 - m_2}{2}$.
		\item If $l_0 < \frac{m_1 - m_2}{2}$, then $\gamma_1\gamma_2^{-1}$ is hyperbolic. Furthermore, the geometric triple of $\Gamma$ with respect to the new generating pair $(\gamma_1\gamma_2^{-1}, \gamma_2)$ is $(0, m_1 - m_2 - 2l_0, m_2)$, with the minimum distance $d(A_{\gamma_2}, A_{\gamma_1\gamma_2^{-1}}) = l_0$.
	\end{itemize}
\end{prop}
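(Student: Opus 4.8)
The argument will apply Corollaries~\ref{cor:aut_type} and~\ref{cor:aut_impl} to the element $\gamma_1\gamma_2^{-1}$ at the basepoint $v^+$, so the whole proof reduces to computing the displacement $D:=d\big((\gamma_1\gamma_2^{-1})^{-1}.v^+,\ \gamma_1\gamma_2^{-1}.v^+\big)$ and comparing it with $d(v^+,\gamma_1\gamma_2^{-1}.v^+)$. The hypothesis that the candidate triple $(l',m_1',m_2)=(l-m_2,\,m_1-m_2,\,m_2)$ is in case (III) means $l=m_2$ and $m_1>m_2$; since $\gamma_2$ translates the length-$m_2$ segment $[v^-,v^+]\subset A_{\gamma_2}$ by $m_{\gamma_2}=m_2$, we have $\gamma_2.v^-=v^+$, hence $\gamma_1\gamma_2^{-1}.v^+=\gamma_1.v^-$ and $(\gamma_1\gamma_2^{-1})^{-1}.v^+=\gamma_2.(\gamma_1^{-1}.v^+)$. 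The points $v^-,\gamma_1^{-1}.v^+,v^+,\gamma_1.v^-$ all lie on $A_{\gamma_1}$, so first I would record that $[v^+,\gamma_1\gamma_2^{-1}.v^+]\subset A_{\gamma_1}$ and $[v^+,(\gamma_1\gamma_2^{-1})^{-1}.v^+]=\gamma_2.[v^-,\gamma_1^{-1}.v^+]\subset\gamma_2.A_{\gamma_1}$, each of length $m_1-m_2$, the former issuing from $v^+$ in the translation direction of $\gamma_1$.

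\textbf{Key step (and main obstacle).} I would then show these two geodesics out of $v^+$ overlap in a segment of length exactly $\delta:=\min(l_0,\,m_1-m_2)$, so that $D=2\big((m_1-m_2)-\delta\big)$. Both issue from $v^+=\gamma_2.v^-\in A_{\gamma_1}\cap\gamma_2.A_{\gamma_1}$. The point is a bookkeeping of directions at $v^+$: the direction back to $v^-$ (common to $A_{\gamma_1}$ and $A_{\gamma_2}$), the forward direction of $A_{\gamma_1}$, and the forward direction of $A_{\gamma_2}$ are pairwise distinct because $A_{\gamma_1}\cap A_{\gamma_2}=[v^-,v^+]$ exactly; and since $A_{\gamma_1},A_{\gamma_2}$ already diverge at $v^-$, the $\gamma_2$-image of the direction at $v^-$ pointing away from $v^+$ along $A_{\gamma_1}$ is a direction at $v^+$ distinct from the first and third of these. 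Hence any nondegenerate overlap of $A_{\gamma_1}$ and $\gamma_2.A_{\gamma_1}$ issuing from $v^+$ must run along the forward direction of $A_{\gamma_1}$, so $A_{\gamma_1}\cap\gamma_2.A_{\gamma_1}=[v^+,w]$ with $d(v^+,w)=l_0$; the two length-$(m_1-m_2)$ geodesics agree exactly on the part they both spend inside $[v^+,w]$, which has length $\delta$, and diverge (never to rejoin) thereafter. Correctly matching this combinatorial overlap to the invariant $l_0=\ell(A_{\gamma_1}\cap\gamma_2.A_{\gamma_1})$ is where I expect the real work to be.

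\textbf{The dichotomy.} Granting $D=2(m_1-m_2)-2\delta$ and $d(v^+,\gamma_1\gamma_2^{-1}.v^+)=m_1-m_2$: since $m_1-m_2>\tfrac{m_1-m_2}{2}$, one has $\delta<\tfrac{m_1-m_2}{2}$ iff $l_0<\tfrac{m_1-m_2}{2}$, which by Corollary~\ref{cor:aut_type} is exactly when $\gamma_1\gamma_2^{-1}$ is hyperbolic; then $\delta=l_0$ and Corollary~\ref{cor:aut_impl} gives translation length $D-(m_1-m_2)=m_1-m_2-2l_0$. If $l_0\geq\tfrac{m_1-m_2}{2}$ the displacement inequality fails and, the action being inversion-free, $\gamma_1\gamma_2^{-1}$ is elliptic, with $d(v^+,T_{\gamma_1\gamma_2^{-1}})=\tfrac12 d(v^+,\gamma_1\gamma_2^{-1}.v^+)=\tfrac{m_1-m_2}{2}$ by Proposition~\ref{prop:elliptic}.

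\textbf{Locating the limit sets.} Finally I would position $T_{\gamma_1\gamma_2^{-1}}$, resp.\ $A_{\gamma_1\gamma_2^{-1}}$, relative to $A_{\gamma_2}$. In the elliptic case the midpoint $u$ of $[v^+,\gamma_1\gamma_2^{-1}.v^+]$ lies in $T_{\gamma_1\gamma_2^{-1}}$ (Corollary~\ref{cor:aut_impl}) and on $A_{\gamma_1}$ strictly beyond $v^+$, so the nearest point of $A_{\gamma_2}$ to $u$ is $v^+$, at distance $\tfrac{m_1-m_2}{2}$. A common point $p\in T_{\gamma_1\gamma_2^{-1}}\cap A_{\gamma_2}$ would give $\gamma_1^{-1}.p=\gamma_2^{-1}.p$, whence $m_2=d(\gamma_2^{-1}.p,p)=d(\gamma_1^{-1}.p,p)=m_1+2d(p,A_{\gamma_1})\geq m_1$, a contradiction; so $T_{\gamma_1\gamma_2^{-1}}\cap A_{\gamma_2}=\varnothing$. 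Then for any $z\in A_{\gamma_2}$ the geodesic $[z,u]$ factors as $[z,v^+]\cup[v^+,u]$, the projection of $z$ to $T_{\gamma_1\gamma_2^{-1}}$ cannot lie on $[z,v^+]\subset A_{\gamma_2}$ and cannot be any point of $[v^+,u]$ other than $u$ (as $d(v^+,T_{\gamma_1\gamma_2^{-1}})=\tfrac{m_1-m_2}{2}=d(v^+,u)$), so $d(z,T_{\gamma_1\gamma_2^{-1}})=d(z,v^+)+\tfrac{m_1-m_2}{2}$, giving $d(A_{\gamma_2},T_{\gamma_1\gamma_2^{-1}})=\tfrac{m_1-m_2}{2}$. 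The hyperbolic case is parallel: Corollary~\ref{cor:aut_impl} and Proposition~\ref{prop:hyperbolic} place the nearest point of $A_{\gamma_1\gamma_2^{-1}}$ to $v^+$ at the vertex $w$ with $d(v^+,w)=l_0$, and $A_{\gamma_1\gamma_2^{-1}}$ contains the sub-arc of $A_{\gamma_1}$ between the two points of $[v^+,\gamma_1\gamma_2^{-1}.v^+]$ at distance $l_0$ from its ends (of length $m_1-m_2-2l_0$); since $A_{\gamma_1\gamma_2^{-1}}$ does not run from $w$ back toward $v^+$, the same projection argument shows $A_{\gamma_1\gamma_2^{-1}}$ and $A_{\gamma_2}$ are disjoint with $d(A_{\gamma_2},A_{\gamma_1\gamma_2^{-1}})=l_0$, so the geometric triple of $(\gamma_1\gamma_2^{-1},\gamma_2)$ is $(0,\,m_1-m_2-2l_0,\,m_2)$.
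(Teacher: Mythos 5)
Your proposal is correct and follows essentially the same route as the paper: reduce everything to the displacement $d\big((\gamma_1\gamma_2^{-1})^{-1}.v^+,\gamma_1\gamma_2^{-1}.v^+\big)=2\max(0,\,m_1-m_2-l_0)$ via the direction bookkeeping at $v^+=\gamma_2.v^-$ showing the two image geodesics overlap in exactly $\min(l_0,m_1-m_2)$, then apply Corollaries~\ref{cor:aut_type} and~\ref{cor:aut_impl}. Your final ``locating the limit sets'' paragraph supplies the projection/disjointness details that the paper's proof asserts more tersely (including the clean observation that a point of $T_{\gamma_1\gamma_2^{-1}}\cap A_{\gamma_2}$ would force $m_2\geq m_1$), but this is elaboration of the same argument rather than a different one.
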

\begin{proof} As established in the proof of Proposition~\ref{prop:reduce_ii}, the path $\gamma_1.P = [v^+,\gamma_1\gamma_2^{-1}.v^+]$ has length $m_1-m_2$ and emanates from $v^+$ to $\gamma_1\gamma_2^{-1}.v^+$ in the positive direction along $A_{\gamma_1}$. The length of $\gamma_2.P = [v^+,\gamma_2\gamma_1^{-1}.v^+]$ is also $m_1 - m_2$, while the condition $l = m_2$ implies that $\gamma_2.P$ does not follow the negative direction along $A_{\gamma_1}\cap A_{\gamma_2}$ when emanating from $v^+$. Therefore, $[v^+,\gamma_2\gamma_1^{-1}.v^+]$ either emanates from $v^+$ in a direction different from those in $A_{\gamma_1}\cup A_{\gamma_2}$, 
or shares common edges with $[v^+,\gamma_1\gamma_2^{-1}.v^+]$.\newline

\par Notice that $\gamma_2\gamma_1^{-1}.v^+\in \mathcal{V}(\gamma_2.A_{\gamma_1})$, while the condition $l=m_2$ implies that $\gamma_1\gamma_2^{-1}.v^+ = \gamma_1.v^-\in\mathcal{V}(A_{\gamma_1})$. On the other hand, $\gamma_2.v^+\in\mathcal{V}(\gamma_2.A_{\gamma_1})$ and $[v^+, \gamma_2.v^+]\subset A_{\gamma_2}\setminus A_{\gamma_1}$, hence $v^+$ is an endpoint of the intersection $A_{\gamma_1}\cap \gamma_2.A_{\gamma_1}$. It follows that
	\begin{equation}
	\ell(\gamma_1.P\cap \gamma_2.P) = \min(l_0, m_1-m_2),
	\end{equation}
	and
	\begin{equation}
	d(\gamma_1\gamma_2^{-1}.v^+,\gamma_2\gamma_1^{-1}.v^+) = 2(m_1 - m_2 - \ell(\gamma_1.P\cap \gamma_2.P)) = 2\max(0, m_1-m_2 - l_0).
	\end{equation}

We continue by discussing the two cases claimed in the proposition.\newline
	
	\textbf{Case (i): $l_0 \geq \frac{m_1 - m_2}{2}$.} 
	In this case, the distance
	\begin{equation}
	d(\gamma_1\gamma_2^{-1}.v^+,\gamma_2\gamma_1^{-1}.v^+)  = 2\max(0, m_1-m_2 - l_0) \leq m_1-m_2 = d(v^+,\gamma_1\gamma_2^{-1}.v^+),
	\end{equation}
	thus Corollary~\ref{cor:aut_type} implies that $\gamma_1\gamma_2^{-1}$ is elliptic.
\begin{figure}[H]
        \centering
    \includegraphics[scale=1.2]{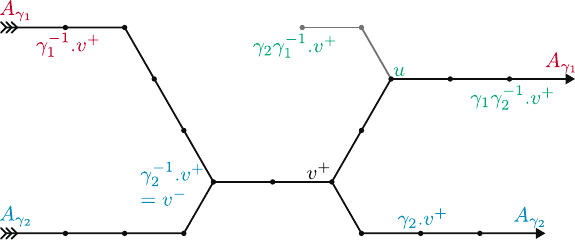}
        \caption{An elliptic reduction: the geometric triple $(l, m_1, m_2) = (2,6,2)$ yields a candidate triple in case (III) with $l_0 = 2$. The element $\gamma_1\gamma_2^{-1}$ is elliptic, and a vertex $u$ fixed by it is marked.}
        \label{fig:reduce_iiib}
    \end{figure}

	Corollary \ref{cor:aut_impl} further implies that $[v^+,\gamma_1\gamma_2^{-1}.v^+]$ intersects with $T_{\gamma_1\gamma_2^{-1}}$ at the midpoint $u$, thus
	\begin{equation}
	d(A_{\gamma_2}, T_{\gamma_1\gamma_2^{-1}}) = d(v^+,u) = \frac{1}{2}d(v^+, \gamma_1\gamma_2^{-1}.v^+)= \frac{m_1 - m_2}{2}.
	\end{equation}
	
	\textbf{Case (ii): $l_0 < \frac{m_1 - m_2}{2}$.}
	First, we establish the hyperbolicity: the distance
	\begin{equation}
	d(\gamma_1\gamma_2^{-1}.v^+,\gamma_2\gamma_1^{-1}.v^+) = 2(m_1-m_2 - l_0) > m_1-m_2 = d(v^+,\gamma_1\gamma_2^{-1}.v^+),
	\end{equation}
	thus Corollary~\ref{cor:aut_type} implies that $\gamma_1\gamma_2^{-1}$ is hyperbolic, and Corollary \ref{cor:aut_impl} implies that the translation length
	\begin{equation}
	m_{\gamma_1\gamma_2^{-1}} = d(\gamma_1\gamma_2^{-1}.v^+,\gamma_2\gamma_1^{-1}.v^+) - d(v^+,\gamma_1\gamma_2^{-1}.v^+) = m_1 - m_2 - 2l_0.
	\end{equation}

	 \begin{figure}[H]
        \centering
        \includegraphics[scale=1.2]{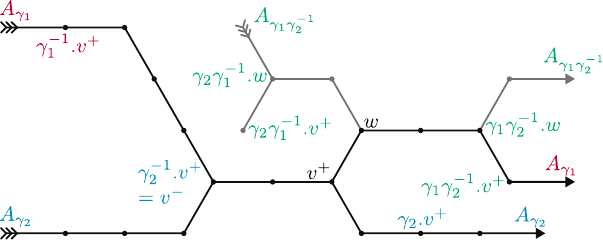}
        \caption{A hyperbolic reduction: the geometric triple $(l, m_1, m_2) = (2,6,2)$ yields a candidate triple in case (III) with $l_0 = 1$. The new element $\gamma_1\gamma_2^{-1}$ is hyperbolic, and its axis is disjoint from $A_{\gamma_2}$.}
        \label{fig:reduce_iiia}
    \end{figure}

	We now determine the geometric triple by considering the positional relation between $A_{\gamma_2}$ and $A_{\gamma_1\gamma_2^{-1}}$. Let $w$ be the closest point projection of $v^+$ to $A_{\gamma_1\gamma_2^{-1}}$. By Corollary \ref{cor:aut_impl},
	\begin{equation}
	d(v^+,w) = d(v^+,\gamma_1\gamma_2^{-1}.v^+) - \frac{1}{2}d(\gamma_1\gamma_2^{-1}.v^+,\gamma_2\gamma_1^{-1}.v^+) = l_0.
	\end{equation}
	This shows our claim if $l_0>0$. Otherwise, the condition $l_0 = 0$
	implies that $w = v^+$, and the property of hyperbolic automorphisms implies that
	\begin{equation}
	[v^+, \gamma_1\gamma_2^{-1}.v^+]\cup [v^+,\gamma_2\gamma_1^{-1}.v^+]\subset A_{\gamma_1\gamma_2^{-1}}.
	\end{equation}
	As we have shown, neither $[v^+, \gamma_1\gamma_2^{-1}.v^+]$ nor $[v^+,\gamma_2\gamma_1^{-1}.v^+]$ emanates from $v^+$ along $A_{\gamma_2}$. Therefore $A_{\gamma_2}\cap A_{\gamma_1\gamma_2^{-1}} = \{v^+\}$, which also yields our desired conclusion.
\end{proof}
\begin{prop}\label{prop:reduce_iv}
	Suppose the geometric triple $(l, m_1, m_2)$ is in case (I) and its 
	associated candidate triple $(l', m_1', m_2)$ is in case (IV): $m_1' = 0$. 
	Then $\gamma_1\gamma_2^{-1}$ is elliptic. Furthermore, the 
	intersection $A_{\gamma_2}\cap T_{\gamma_1\gamma_2^{-1}}$ has length $l'$, with $v^+$ as an endpoint.
\end{prop}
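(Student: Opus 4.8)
The plan is to exploit the degeneracy forced by case~(IV). Write $m \coloneqq m_1 = m_2$; since the original triple $(l,m_1,m_2)$ lies in case~(I) we have $l \ge m$, and the candidate parameter is $l' = l - m \ge 0$ (see~\eqref{eq:triple}). Recall the normalization: $A_{\gamma_1}\cap A_{\gamma_2} = [v^-,v^+]$ and both $\gamma_1,\gamma_2$ translate by $m$ from $v^-$ toward $v^+$ along this segment. Since $m \le l$, the point at distance $m$ from $v^+$ inside $[v^-,v^+]$ equals both $\gamma_1^{-1}.v^+$ and $\gamma_2^{-1}.v^+$; hence $\gamma_1\gamma_2^{-1}.v^+ = \gamma_1.(\gamma_1^{-1}.v^+) = v^+$, so $\gamma_1\gamma_2^{-1}$ is elliptic by Definition~\ref{def:aut-types}, and $T_{\gamma_1\gamma_2^{-1}}$ is the fixed subtree supplied by Proposition~\ref{prop:elliptic}.

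Next I would identify a segment of $A_{\gamma_2}$ contained in $T_{\gamma_1\gamma_2^{-1}}$ of the predicted length. Let $S\subseteq[v^-,v^+]$ be the subsegment of length $l'$ with $v^+$ as an endpoint, and let $p\in S$ lie at distance $t\le l'$ from $v^+$. Then $\gamma_2^{-1}.p$ lies on $A_{\gamma_2}$ at distance $t+m\le l$ from $v^+$ in the negative direction, hence still inside $[v^-,v^+]\subseteq A_{\gamma_1}$, where it coincides with $\gamma_1^{-1}.p$; therefore $\gamma_1\gamma_2^{-1}.p = p$, so $S\subseteq A_{\gamma_2}\cap T_{\gamma_1\gamma_2^{-1}}$, with $v^+$ an endpoint.

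The remaining and, I expect, only genuinely delicate step is the reverse inclusion $A_{\gamma_2}\cap T_{\gamma_1\gamma_2^{-1}}\subseteq S$. Since $T_{\gamma_1\gamma_2^{-1}}$ and $A_{\gamma_2}$ are both subtrees of $X$, their intersection is a subsegment of the line $A_{\gamma_2}$ containing $S$, so it suffices to show that the two vertices of $A_{\gamma_2}$ lying immediately outside $S$ are moved by $\gamma_1\gamma_2^{-1}$. For the vertex $v^{++}$ adjacent to $v^+$ on the positive side: $v^{++}\notin A_{\gamma_1}$ because $v^+$ is the far endpoint of the overlap, whereas $\gamma_2^{-1}.v^{++}$ sits at distance $m-1<l$ from $v^+$ inside $[v^-,v^+]\subseteq A_{\gamma_1}$, so $\gamma_1$ carries it to the vertex one edge beyond $v^+$ on $A_{\gamma_1}$, which is off $A_{\gamma_2}$; thus $\gamma_1\gamma_2^{-1}.v^{++}\ne v^{++}$. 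For the vertex $p_0'$ adjacent to the other endpoint of $S$ on the side of $v^-$ (note $l'+1\le l$, so $p_0'\in[v^-,v^+]\subseteq A_{\gamma_1}$): the point $\gamma_2^{-1}.p_0'$ lies on $A_{\gamma_2}$ at distance $l'+1+m = l+1$ from $v^+$, i.e.\ one edge beyond $v^-$, hence off $A_{\gamma_1}$ with nearest point $v^-$; applying $\gamma_1$, which preserves $A_{\gamma_1}$ and hence commutes with nearest-point projection onto it, yields a point off $A_{\gamma_1}$ whose nearest point is $\gamma_1.v^-$, the far endpoint of $S$, so in particular $\gamma_1\gamma_2^{-1}.p_0'\ne p_0'$. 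This forces $A_{\gamma_2}\cap T_{\gamma_1\gamma_2^{-1}} = S$, a segment of length $l'$ with endpoint $v^+$, as claimed. The only bookkeeping that needs care is the behaviour of nearest-point projections when $\gamma_2^{-1}$ pushes a point past $v^-$ off $A_{\gamma_1}$; this is the same elementary principle---an isometry fixing a geodesic setwise commutes with projection onto it---already used in the proof of Proposition~\ref{prop:reduce_iii}, so no new machinery is required.
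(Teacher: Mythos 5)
Your proof is correct and follows essentially the same route as the paper's: the identity $\gamma_1^{-1}.v^+=\gamma_2^{-1}.v^+$ forced by $m_1=m_2$ and $l\ge m_2$ gives ellipticity, and the intersection with $A_{\gamma_2}$ is then pinned down as the segment $[\gamma_2.v^-,v^+]$ of length $l'$ by checking that it cannot extend past either endpoint. The only difference is cosmetic: you verify explicitly that the whole subsegment is pointwise fixed and test the two adjacent \emph{vertices}, whereas the paper tests the adjacent \emph{edges} $e^+$ and $\gamma_2.e^-$ and leaves the pointwise fixing of the interior implicit.
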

\begin{proof}
	We first derive the ellipticity of $\gamma_1\gamma_2^{-1}$. The condition $m_1' = 0$ implies $m_1 = m_2$. Since the original triple $(l,m_1,m_2)$ is in case (I), the resulting relation $l \geq m_1 = m_2$ implies that
	\begin{equation}
	\gamma_1^{-1}.v^+, \gamma_2^{-1}.v^+\in \mathcal{V}(A_{\gamma_1}\cap A_{\gamma_2}),\ d(\gamma_1^{-1}.v^+,v^+) = m_1 = d(\gamma_2^{-1}.v^+,v^+).
	\end{equation}
	Since $\gamma_1$ and $\gamma_2$ translate toward the same direction, 
	$\gamma_1^{-1}.v^+ = \gamma_2^{-1}.v^+$, which implies 
	$\gamma_1\gamma_2^{-1}.v^+ = v^+$; see Fig. \ref{fig:reduce_iv}.

	    \begin{figure}[H]
        \centering
        \includegraphics[scale=1.5]{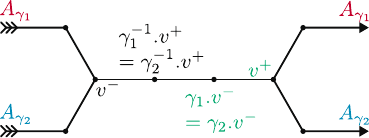}
        \caption{An elliptic reduction: the geometric triple $(l, m_1, m_2) = (3,2,2)$ yields a candidate triple in case (IV). The new element $\gamma_1\gamma_2^{-1}$ is elliptic, and the path $[\gamma_1.v^-,v^+]$ is fixed.}
        \label{fig:reduce_iv}
    \end{figure}
	This fixed point demonstrates the ellipticity of $\gamma_1\gamma_2^{-1}$.\newline
	
	Next, we obtain the two endpoints of $A_{\gamma_2}\cap T_{\gamma_1\gamma_2^{-1}}$ 
	to confirm the intersection length. Let $e^+$ be the edge 
	in $\mathcal{E}(A_{\gamma_2}\setminus A_{\gamma_1})$ that is 
	incident with $v^+$. Since $\gamma_1^{-1}.e^+\notin \mathcal{E}(A_{\gamma_2})$ 
	and $\gamma_2^{-1}.e^+\in \mathcal{E}(A_{\gamma_2})$, $\gamma_1^{-1}.e^+\neq \gamma_2^{-1}.e^+$, 
	which further implies $e^+\neq \gamma_1\gamma_2^{-1}.e^+$. As we have shown earlier, $e^+$ 
	and $\gamma_1\gamma_2^{-1}.e^+$ share the vertex $v^+$, confirming that $v^+$ is one 
	endpoint of $A_{\gamma_2}\cap T_{\gamma_1\gamma_2^{-1}}$.

	For a similar reason, if we let $e^-$ be the edge in $\mathcal{E}(A_{\gamma_2}\setminus A_{\gamma_1})$ incident with $v^-$, then
	\begin{equation}
	\gamma_1.e^-\neq \gamma_2.e^-,\quad \gamma_1.e^-\cap \gamma_2.e^- = \gamma_2.v^-.
	\end{equation}
	Writing $\gamma_1.e^-$ as $(\gamma_1\gamma_2^{-1}).(\gamma_2.e^-)$, we observe that $\gamma_2.v^-$ is the other endpoint of $A_{\gamma_2}\cap T_{\gamma_1\gamma_2^{-1}}$. Consequently,
	\begin{equation}
	\ell(A_{\gamma_2}\cap T_{\gamma_1\gamma_2^{-1}}) = d(v^+, \gamma_2.v^-) = d(v^+,v^-) - d(v^-, \gamma_2.v^-) = l - m_2 = l'.
	\end{equation}
\end{proof}

\subsection{Proof of Theorem \ref{thm:main}}
Let $(\gamma_1, \gamma_2)$ be a pair of hyperbolic automorphisms generating a discrete subgroup $\langle\gamma_1,\gamma_2\rangle$, and $(l, m_1, m_2)$ be the associated geometric triple. We apply Nielsen transformations to the generating pair, and reduce the geometric triple simultaneously, following the rule (cf. \cite{conder2020discrete}, Algorithm 4.1):
\begin{itemize}
	\item If $m_1<m_2$, replace $(\gamma_1,\gamma_2)$ with $(\gamma_2,\gamma_1)$, and $(l,m_1,m_2)$ with $(l,m_2,m_1)$.
	\item If $m_1\geq m_2$ and the triple satisfies case (I), replace $(\gamma_1,\gamma_2)$ with $(\gamma_1\gamma_2^{-1},\gamma_2)$, and $(l,m_1,m_2)$ with $(l-m_2,m_1-m_2,m_2)$.
	\item Terminate the reduction procedure if $m_1\geq m_2$ and the triple satisfies case (II), (III), or (IV).
\end{itemize}
The reduction resembles the Euclidean division algorithm and thus always terminates.
\begin{lem}\label{lem:euc}
	Define the sequences $\{m_i\}_{i=1}^{k+1}$ and $\{q_i\}_{i=1}^{k-1}$ as in the beginning of Section \ref{sec:schottky-criterion}. Starting with the triple $(l,m_1,m_2)$, $m_1\geq m_2$, the change of the two translation lengths in the triple agrees with the Euclidean algorithm before termination:
	\begin{equation}\label{eq:euc-length-evolution}
		\begin{split}
			(m_1, m_2) &\to (m_1 - m_2, m_2) \to \dots \to (m_1 - q_1 m_2, m_2) = (m_3, m_2) \\
			\to (m_2, m_3) &\to (m_2 - m_3, m_3) \to \dots \to (m_2 - q_2 m_3, m_3) = (m_4, m_3) \\
			&\quad\vdots \\
			\to (m_{k-1}, m_k) &\to (m_{k-1}-m_k, m_k)\to \dots \to (m_{k-1} - q_{k-1} m_k, m_k) = (m_{k+1},m_k) \\
			\to (m_k,m_{k+1}) & = (\gcd(m_1, m_2),0).
		\end{split}
	\end{equation}
	Denote the sequence of triples occurring in this algorithm by $\{(l^{(n)},m_1^{(n)},m_2^{(n)})\}_{n=0}^N$, so $(l^{(0)},m_1^{(0)},m_2^{(0)}) = (l,m_1,m_2)$. Then for any state index $n\geq 0$, the triple $(l^{(n)},m_1^{(n)},m_2^{(n)})$ satisfies
	\begin{equation}\label{eq:invariant}
		l^{(n)} - m_1^{(n)} - m_2^{(n)} = l - m_1 - m_2.
	\end{equation}
	Furthermore, the algorithm terminates if and only if one of the following occurs:
	\begin{itemize}
		\item The algorithm terminates in case (II) or (III) if $l^{(n)}<m_2^{(n)}$ for a certain index $n$. Specifically, it terminates in case (III) if $l^{(n)} = 0$; otherwise, it terminates in case (II).
		\item If the condition $l^{(n)}\geq m_2^{(n)}$ persists until the final state $(l^{(N)},m_1^{(N)},m_2^{(N)}) = (l - m_1-m_2 + \gcd(m_1,m_2),\gcd(m_1, m_2),0)$ occurs, the algorithm terminates in case (IV).
	\end{itemize}
\end{lem}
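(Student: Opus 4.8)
This is a bookkeeping lemma: once Propositions~\ref{prop:reduce_ii}, \ref{prop:reduce_iii}, and~\ref{prop:reduce_iv} are in hand---so that each Nielsen reduction performed in case (I) genuinely replaces the geometric triple $(l,m_1,m_2)$ by the candidate triple $(l-m_2,\,m_1-m_2,\,m_2)$---the statement follows by tracking how the three coordinates evolve. The plan has three parts: (i) verify the invariant \eqref{eq:invariant} by induction; (ii) match the evolution of the length pair with the subtractive Euclidean algorithm, yielding \eqref{eq:euc-length-evolution}; and (iii) rewrite the defining inequality of case (I) in terms of $l^{(n)}$ versus $m_2^{(n)}$ and read off the termination dichotomy.

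For (i), the algorithm uses only the swap $(l,m_1,m_2)\mapsto(l,m_2,m_1)$, which fixes both $l$ and $m_1+m_2$, and the reduction $(l,m_1,m_2)\mapsto(l-m_2,\,m_1-m_2,\,m_2)$, under which $l-(m_1+m_2)\mapsto (l-m_2)-\bigl((m_1-m_2)+m_2\bigr)=l-m_1-m_2$; both operations preserve $l^{(n)}-m_1^{(n)}-m_2^{(n)}$, which equals $l-m_1-m_2$ at $n=0$, so \eqref{eq:invariant} holds at every state, and in particular $l^{(n)}=m_1^{(n)}+m_2^{(n)}+(l-m_1-m_2)$. For (ii), when $m_1^{(n)}\ge m_2^{(n)}$ the four conditions (I)--(IV) are exhaustive and mutually exclusive, the algorithm performs a reduction precisely in case (I) (sending the length pair $(m_1^{(n)},m_2^{(n)})$ to $(m_1^{(n)}-m_2^{(n)},\,m_2^{(n)})$), and when $m_1^{(n)}<m_2^{(n)}$ it swaps; this is precisely the subtractive Euclidean algorithm---repeatedly subtract the smaller coordinate from the larger, swapping when they become unordered---so the realized length pairs form an initial segment of \eqref{eq:euc-length-evolution}: inside the block with fixed smaller coordinate $m_{i+1}$ one passes through $(m_i-jm_{i+1},m_{i+1})$ for $j=0,1,\dots$, and after $q_i$ reductions reaches $(m_{i+2},m_{i+1})$ with $m_{i+2}<m_{i+1}$, whereupon the swap opens the next block with $(m_{i+1},m_{i+2})$. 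A termination in case (II), (III), or (IV) merely halts the process without altering any length pair before it appears on the list.

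For (iii), at a state normalized so that $m_1^{(n)}\ge m_2^{(n)}$ we have $\min(m_1^{(n)},m_2^{(n)})=m_2^{(n)}$, so case (I) holds exactly when $m_2^{(n)}>0$ and $l^{(n)}\ge m_2^{(n)}$, while cases (II) and (III) occur exactly when $l^{(n)}<m_2^{(n)}$, case (III) being the sub-case $l^{(n)}=0$. If $l^{(n)}<m_2^{(n)}$ for some $n$, take $n$ minimal; non-negativity of lengths forces $m_2^{(n)}>0$, so that state lies in case (II) or (III) and the algorithm halts there, exactly as in the first bullet. Otherwise $l^{(n)}\ge m_2^{(n)}$ at every state, so at each state with $m_2^{(n)}>0$ the triple is in case (I), the algorithm reduces, and by (ii) it runs through all of \eqref{eq:euc-length-evolution} until the length pair $(\gcd(m_1,m_2),0)$ appears; by the invariant the accompanying first coordinate is then $l-m_1-m_2+\gcd(m_1,m_2)$, and this terminal triple, whose smaller coordinate is $0$, lies in case (IV). The two alternatives are mutually exclusive, and since the process always terminates (it strictly descends the finite list) they are exhaustive.

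The only delicate points here are conventional, not mathematical, and the closest thing to an obstacle is keeping the normalization straight: one should fix whether a state is recorded before or after the normalizing swap---and accordingly present the terminal case-(IV) triple with $m_2^{(N)}=0$ rather than $m_1^{(N)}=0$---and observe that within the subtractive run the equality $m_1^{(n)}=m_2^{(n)}$ responsible for the case-(IV) candidate ($m_1^{(n)}-m_2^{(n)}=0$) first occurs exactly when the smaller coordinate has descended to $\gcd(m_1,m_2)$, i.e.\ at the tail of \eqref{eq:euc-length-evolution}. All the genuine geometry has already been absorbed into the reduction propositions, so beyond these bookkeeping observations the argument is a straightforward unwinding of the definitions.
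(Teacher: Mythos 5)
Your proof is correct and follows exactly the route the paper intends: the paper offers no written argument for this lemma beyond the remark that it ``is clear regarding the description of the algorithm and the four cases,'' and your three-part bookkeeping (the invariance of $l^{(n)}-m_1^{(n)}-m_2^{(n)}$ under both moves, the identification with the subtractive Euclidean algorithm, and the case-(I)-versus-(II)/(III)/(IV) termination dichotomy) is precisely the verification being left to the reader. No discrepancies with the paper's setup.
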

The lemma is clear regarding the description of the algorithm and the four cases for the geometric triple. We now take a closer look at the algorithm based on this lemma:
\begin{prop}\label{prop:termination}
	Let $(l,m_1,m_2)$ be a geometric triple with $m_1\geq m_2$, and define the sequences $\{m_i\}_{i=1}^{k+1}$ and $\{q_i\}_{i=1}^{k-1}$ as in the beginning of Section \ref{sec:schottky-criterion}. The following cases exhaust the possible outcomes of the reduction algorithm:
	\begin{itemize}
		\item If $l = 0$, the algorithm terminates in case (III) without any reduction steps.
		\item If $0<l<m_2$, the algorithm terminates in case (II) without any reduction steps.
		\item If $m_2< l< m_1+m_2 - \gcd(m_1,m_2)$, there are unique integers $j,q$ with
		\begin{equation}\label{eq:l-inequality-main}
			(m_1 + m_2) - m_j + (q-1)m_{j+1} < l \leq (m_1 + m_2) - m_j + q m_{j+1},\ 1 \leq j \leq k-1,\ 0 \leq q \leq q_j - 1.
		\end{equation}
		\begin{itemize}
			\item If $l = (m_1+m_2) - m_j + qm_{j+1}$, the algorithm terminates in case (III) after reduction.
			\item If $l<(m_1+m_2)-m_j+qm_{j+1}$, the algorithm terminates in case (II) after reduction. 
		\end{itemize}
		\item If $l\geq (m_1+m_2) - \gcd(m_1,m_2)$, the algorithm terminates in case (IV) after reduction.
	\end{itemize}
\end{prop}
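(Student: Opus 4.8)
The plan is to run the reduction algorithm while tracking the single invariant furnished by Lemma~\ref{lem:euc}. Set $c:=m_1+m_2-l$; then every triple $(\ell,a,b)$ that occurs satisfies $\ell=a+b-c$, and the classification of such a triple becomes purely arithmetic: for a triple with $a\ge b>0$ (the shape after any swap) one has $\ell\ge b$ — i.e.\ case (I) — iff $a\ge c$, while $\ell=0$ (case (III)) iff $a+b=c$, and $0<\ell<b$ (case (II)) iff $a<c<a+b$. On the other hand, Lemma~\ref{lem:euc} tells us that the larger translation length, read off at the successive states where a reduction-check is performed, runs in strictly decreasing order through
\[
S=\{\,m_t-i\,m_{t+1}\;:\;1\le t\le k-1,\ 0\le i\le q_t-1\,\}
\qquad(\text{e.g. }m_1,m_1-m_2,\dots,m_3,m_2,m_2-m_3,\dots),
\]
whose largest element is $m_1$ and whose smallest is $m_k=\gcd(m_1,m_2)$. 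So the algorithm simply marches down $S$, reducing at each term that is $\ge c$ and halting at the first term $<c$; exhausting $S$ altogether means the last reduction of the final Euclidean phase turns a translation length into $0$, i.e.\ case (IV).

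With this dictionary, three of the four regimes are immediate. If $l=0$ the starting triple is already in case (III) and nothing is done; if $0<l<m_2$ it is in case (II). If $l\ge m_1+m_2-\gcd(m_1,m_2)$, equivalently $c\le\gcd(m_1,m_2)=\min S$, then every term of $S$ is $\ge c$, the march never halts early, and after the last reduction we reach a triple $(\ell',0,\gcd(m_1,m_2))$, which is case (IV). (The lone value $l=m_2$, literally absent from the statement, is in case (I), reduces once, and lands in case (III), so the list is still exhaustive.)

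For $m_2<l<m_1+m_2-\gcd(m_1,m_2)$, equivalently $\gcd(m_1,m_2)<c<m_1$, I would first record the telescoping identity: for fixed $j$, $\bigcup_{q=0}^{q_j-1}\bigl((m_1+m_2)-m_j+(q-1)m_{j+1},\,(m_1+m_2)-m_j+q\,m_{j+1}\bigr]=\bigl((m_1+m_2)-m_j-m_{j+1},\,(m_1+m_2)-m_{j+1}-m_{j+2}\bigr]$ (using $m_j-q_jm_{j+1}=m_{j+2}$), and that consecutive $j$'s abut, so the intervals $I_{j,q}$ with $1\le j\le k-1$, $0\le q\le q_j-1$ tile $(0,\,m_1+m_2-\gcd(m_1,m_2)]$. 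Hence $l$ lies in a unique $I_{j,q}$, which gives the existence and uniqueness assertion, and in the variable $c$ the membership $l\in I_{j,q}$ reads $c\in[\,m_j-qm_{j+1},\,m_j-(q-1)m_{j+1}\,)$, with $l$ the right endpoint iff $c=m_j-qm_{j+1}$. Both $m_j-qm_{j+1}$ and $m_j-(q-1)m_{j+1}$ belong to $S$ and are consecutive there. If $c$ lies strictly inside, the march halts at $s=m_j-qm_{j+1}\in S$, where one computes $\ell=(\text{the $S$-term just above }s)-c=m_j-(q-1)m_{j+1}-c\in(0,m_{j+1})$; since the smaller coordinate there is $m_{j+1}$, this is case (II). If $c=m_j-qm_{j+1}\in S$, the march instead halts one $S$-term lower, where $\ell=0$: case (III). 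The only boundary, $c=m_j-qm_{j+1}=\gcd(m_1,m_2)$, forces $(j,q)=(k-1,q_{k-1}-1)$ and $l=m_1+m_2-\gcd(m_1,m_2)$, which lies outside this regime and is precisely the case (IV) threshold handled above.

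The conceptual core is short: after Lemma~\ref{lem:euc}, everything reduces to comparing one integer $c$ against a fixed decreasing sequence $S$. I expect the main obstacle to be the bookkeeping needed to make rigorous the claim that ``the march halts precisely at the $S$-term just below $c$, with the stated value of $\ell$.'' Concretely, one must check that no earlier reduction-check terminates the algorithm for the wrong reason — in particular that $\ell$ never becomes negative, which holds because a reduction is applied only to a case-(I) triple — that the transitions between Euclidean phases, where the larger coordinate drops from $m_{t+1}+m_{t+2}$ to $m_{t+1}$ with a swap in between, introduce no spurious halt, and that a reduction produces a zero translation length only in the final phase. Handling the $q=0$ and $q=q_j-1$ ends of each phase uniformly, together with the $(k-1,q_{k-1}-1)$ boundary, is where the write-up needs the most care.
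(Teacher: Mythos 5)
Your proof is correct and follows essentially the same route as the paper: your change of variables $c=m_1+m_2-l$ and the march down the strictly decreasing sequence $S$ are exactly the paper's observation that $l^{(n)}-\min(m_1^{(n)},m_2^{(n)})$ is non-increasing, combined with the invariant of Lemma~\ref{lem:euc}. You are in fact somewhat more explicit than the paper on two points it glosses over: the tiling of $(0,\,m_1+m_2-\gcd(m_1,m_2)]$ by the intervals $I_{j,q}$ (which is what actually gives existence and uniqueness of $(j,q)$), and the boundary value $l=m_2$, which is literally omitted from the proposition's case list but, as you note, reduces once and terminates in case (III).
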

\begin{proof}
	Before giving the proof, we observe that the difference $l^{(n)} - \min(m_1^{(n)},m_2^{(n)})$ is non-increasing. The algorithm continues whenever $l^{(n)} - \min(m_1^{(n)},m_2^{(n)})$ is non-negative, and terminates when it becomes negative, possibly after a final swap of $m_1^{(n)}$ and $m_2^{(n)}$.
	
	The proof for the first two outcomes is clear. If $l\geq m_1 + m_2 - \gcd(m_1,m_2)$, then before the last step of the Euclidean algorithm, we have translation lengths
	\begin{equation}\label{eq:pre-final-lengths}
		m_1^{(N-1)} = m_2^{(N-1)} = \gcd(m_1,m_2).
	\end{equation}
	By Lemma \ref{lem:euc},
	\begin{equation}\label{eq:pre-final-l}
		l^{(N-1)} = l-m_1-m_2 + 2\gcd(m_1,m_2)\geq \gcd(m_1,m_2).
	\end{equation}
	This inequality together with our observation implies that the algorithm continues until the end of the Euclidean division algorithm for $(m_1,m_2)$. By Lemma \ref{lem:euc}, this corresponds to a termination in case (IV).
	
	Suppose now the following inequality holds for integers $1 \leq j \leq k-1$ and $0 \leq q \leq q_j - 1$:
	\begin{equation}\label{eq:l-inequality-jq}
		(m_1 + m_2) - m_j + (q-1)m_{j+1} < l \leq (m_1 + m_2) - m_j + q m_{j+1}.
	\end{equation}
	As in Lemma \ref{lem:euc}, we consider two consecutive states for the translation lengths in the algorithm:
	\begin{equation}\label{eq:two-consecutive-lengths}
		(m_1^{(n)},m_2^{(n)}) = (m_j - q m_{j+1},m_{j+1}),\quad  (m_1^{(n+1)},m_2^{(n+1)}) =  (m_j - (q+1)m_{j+1},m_{j+1}).
	\end{equation}
	The corresponding quantity $l^{(n)}$ is
	\begin{equation}\label{eq:l-n-expression}
		l^{(n)} = l-m_1-m_2 + m_1^{(n)}+m_2^{(n)} = l-m_1-m_2 + m_j - (q-1) m_{j+1}.
	\end{equation}
	The inequality for $l$ implies $0< l^{(n)} \leq m_{j+1}$. Together with our observation, this relation implies one of the followings: 
	\begin{itemize}
		\item If $0<l^{(n)}<m_{j+1}$, \ref{lem:euc} implies that the triple $(l,m_1,m_2)$ terminates in case (II) at the $n$-th state.
		\item If $l^{(n)} = m_{j+1}$, the triple $(l,m_1,m_2)$ terminates in case (III) at the $(n+1)$-th state.
	\end{itemize}
\end{proof}
\begin{proof}[Proof of Theorem \ref{thm:main}]
	Perform the algorithm for the generating pair $(\gamma_1,\gamma_2)$ and the associated geometric triple $(l,m_1,m_2)$ as described at the beginning of this subsection. Parallel to the transformation of $(m_1,m_2)$ described in Lemma \ref{lem:euc}, the generating pair transforms as follows:
	\begin{equation}\label{eq:gamma-sequence}
		\begin{split}
			(\gamma_1, \gamma_2) & \to (\gamma_1\gamma_2^{-1}, \gamma_2) \to \dots \to (\gamma_1\gamma_2^{-q_1},\gamma_2) = (\gamma_3, \gamma_2) \\
			\to (\gamma_2, \gamma_3) & \to (\gamma_2\gamma_3^{-1}, \gamma_3) \to \dots \to (\gamma_2\gamma_3^{-q_2},\gamma_3) = (\gamma_4, \gamma_3) \\
			&\quad\vdots \\
			\to (\gamma_{k-1}, \gamma_k) & \to (\gamma_{k-1}\gamma_k^{-1}, \gamma_k) \to \dots \to (\gamma_{k-1}\gamma_k^{-q_{k-1}},\gamma_k) = (\gamma_{k+1}, \gamma_k) \\
			\to (\gamma_k, \gamma_{k+1}).
		\end{split}
	\end{equation}
	We similarly denote this sequence of generating pairs by $(\gamma_1^{(n)},\gamma_2^{(n)})$. By Proposition \ref{prop:reduce_ii}, the candidate triple $(l^{(n)},m_1^{(n)},m_2^{(n)})$ is the geometric triple for the generating pair $(\gamma_1^{(n)},\gamma_2^{(n)})$, except for the final step that reduces the triple to cases (III) or (IV). We will discuss the outcome of the algorithm as described in Proposition \ref{prop:termination}.
	
	\textbf{Termination in case (II) or (III) without reduction.} If the algorithm terminates without reduction steps, then $l<m_2$. Proposition \ref{prop:schottky} proves that $\Gamma$ is free and that $(\gamma_1, \gamma_2)$ is a pair of Schottky generators. When $l=0$, this corresponds to case (1) in Theorem \ref{thm:main}. When $0<l<m_2$, it is straightforward to check that the pair corresponds to a specific instance of case (2b) with $j=1$ and $q=0$.
	
	\textbf{Termination in case (II) after reduction.} If the reduction terminates at a candidate triple in case (II), Proposition \ref{prop:termination} shows that $l$ satisfies the inequality
	\begin{equation}\label{eq:l-inequality-caseII}
		(m_1 + m_2) - m_j + (q-1)m_{j+1} < l < (m_1 + m_2) - m_j + q m_{j+1},
	\end{equation}
	for certain $1 \leq j \leq k-1$ and $0 \leq q \leq q_j - 1$, except for $(j,q) = (1,0)$ discussed earlier. Proposition \ref{prop:termination} further implies that the algorithm terminates at the geometric triple
	\begin{equation}\label{eq:triple-at-termination-caseII}
		(l^{(n)},m_1^{(n)},m_2^{(n)}) = (l-m_1-m_2 + m_j - (q-1) m_{j+1}, m_j - q m_{j+1},m_{j+1})
	\end{equation}
	of the generating pair $(\gamma_{\mathrm{b}},\gamma_{\mathrm{a}}) = (\gamma_1^{(n)},\gamma_2^{(n)}) = (\gamma_j\gamma_{j+1}^{-q},\gamma_{j+1})$. The Schottky property for the generating pair is guaranteed by Proposition \ref{prop:schottky}, and the translation lengths $m_{\mathrm{a}}$, $m_{\mathrm{b}}$ and the intersection length $\ell(A_{\gamma_{\mathrm{a}}}\cap A_{\gamma_{\mathrm{b}}})$ are derived from the geometric triple $(l^{(n)},m_1^{(n)},m_2^{(n)})$. Proposition \ref{prop:reduce_ii} also guarantees that $v^+$ remains as an endpoint of the axes intersection after each reduction step, hence $v^+$ is an endpoint of $A_{\gamma_{\mathrm{a}}}\cap A_{\gamma_{\mathrm{b}}}$. This corresponds to case (2b) in Theorem \ref{thm:main}. 
	
	\textbf{Termination in case (III) after reduction.} If the reduction terminates at a candidate triple in case (III), Proposition \ref{prop:termination} shows that
	\begin{equation}\label{eq:l-caseIII}
		l = (m_1 + m_2) - m_j + qm_{j+1}
	\end{equation}
	for certain $1 \leq j \leq k-1$ and $0 \leq q \leq q_j - 1$, except for $(j,q) = (k-1,q_{k-1}-1)$. The algorithm terminates at the candidate triple
	\begin{equation}\label{eq:triple-termination-caseIII}
		(l^{(n+1)},m_1^{(n+1)},m_2^{(n+1)}) = (0, m_j - (q+1) m_{j+1},m_{j+1})
	\end{equation}
	of the generating pair $(\gamma_{\mathrm{b}},\gamma_{\mathrm{a}}) = (\gamma_1^{(n+1)},\gamma_2^{(n+1)}) = (\gamma_j\gamma_{j+1}^{-q-1},\gamma_{j+1})$. 
	
	From the description of the algorithm, the preceding generating pair is $(\gamma_1^{(n)},\gamma_2^{(n)}) = (\gamma_j\gamma_{j+1}^{-q},\gamma_{j+1})$, with the geometric triple $(l^{(n)},m_1^{(n)},m_2^{(n)}) = (m_{j+1}, m_j - q m_{j+1},m_{j+1})$ in case (I). By Proposition \ref{prop:reduce_iii}, the nature of the subsequent generating pair $(\gamma_{\mathrm{b}},\gamma_{\mathrm{a}}) = (\gamma_1^{(n+1)},\gamma_2^{(n+1)})$ is decided by the length $l_0 = \ell(A_{\gamma_j\gamma_{j+1}^{-q}}\cap \gamma_{j+1}.A_{\gamma_j\gamma_{j+1}^{-q}})$:
	\begin{itemize}
		\item If $l_0 \geq \frac{m_j - (q+1) m_{j+1}}{2}$, Proposition \ref{prop:reduce_iii} shows that $\gamma_{\mathrm{b}} = \gamma_j\gamma_{j+1}^{-q-1}$ is elliptic, and distance $d(A_{\gamma_{\mathrm{a}}}, T_{\gamma_{\mathrm{b}}}) =  d(A_{\gamma_{j+1}}, T_{\gamma_j\gamma_{j+1}^{-q-1}}) = \frac{m_j - (q+1) m_{j+1}}{2}$. This corresponds to case (2a)(i) in Theorem \ref{thm:main}.
		\item If $l_0 < \frac{m_j - (q+1) m_{j+1}}{2}$, Proposition \ref{prop:reduce_iii} shows that $\gamma_{\mathrm{b}} = \gamma_j\gamma_{j+1}^{-q-1}$ is hyperbolic, with translation length $m_{\mathrm{b}} = m_{\gamma_j\gamma_{j+1}^{-q-1}} = m_j - (q+1) m_{j+1} - 2l_0$, and distance $d(A_{\gamma_{\mathrm{a}}}, A_{\gamma_{\mathrm{b}}}) =  d(A_{\gamma_{j+1}}, A_{\gamma_j\gamma_{j+1}^{-q-1}}) = l_0$. This corresponds to case (2a)(ii) in Theorem \ref{thm:main}. 
	\end{itemize}
	
	\textbf{Termination in case (IV).} If the reduction terminates at a candidate triple in case (IV), Proposition \ref{prop:termination} shows that $l\geq m_1+m_2 - \gcd(m_1,m_2)$, and the algorithm terminates at the generating pair $(\gamma_{\mathrm{a}},\gamma_{\mathrm{b}}) = (\gamma_1^{(N)},\gamma_2^{(N)}) = (\gamma_k,\gamma_{k+1})$, corresponding to the candidate triple $(l^{(N)},m_1^{(N)}, m_2^{(N)}) = (l - m_1-m_2 + \gcd(m_1,m_2), \gcd(m_1,m_2), 0)$. Proposition \ref{prop:reduce_iv} shows that $\gamma_{k+1}$ is elliptic, and the length $\ell(A_{\gamma_k}\cap T_{\gamma_{k+1}}) = l^{(N)} = l - m_1-m_2 + m_k$. Proposition \ref{prop:reduce_ii} shows that $v^+$ remains as an endpoint of the axes intersection before the last reduction step, and Proposition \ref{prop:reduce_iv} shows that it remains as an endpoint of the axis-tree intersection at the end. This corresponds to case (3) in Theorem \ref{thm:main} if the inequality is strict. When equality holds in the inequality, $l$ satisfies the condition for case (2a) with $(j,q) = (k-1,q_{k-1}-1)$. In this situation, the quantity
	\begin{equation}\label{eq:half-length-equality}
		\frac{m_j - (q+1)m_{j+1}}{2} = \frac{m_{k-1}-q_{k-1}m_k}{2} = 0,
	\end{equation}
	thus case (2a)(i) in Theorem \ref{thm:main} applies, with the correct generators $(\gamma_{k},\gamma_{k-1}\gamma_{k}^{-q_{k-1}}) = (\gamma_k,\gamma_{k+1})$ and correct axis-tree distance $d=0$.
	
This exhaustive case analysis completes the proof of the theorem. 
\renewcommand{\qedsymbol}{}
\namedqedline{Theorem~\ref{thm:main}}
\end{proof}

\section{Weighted Trees}
\label{sec:weighted}

In this section we extend our classification to weighted trees, where translation lengths are allowed to be arbitrary positive real numbers. We prove an analogue of Theorem~\ref{thm:main} in this setting, with a dichotomy governed by the rational or irrational nature of the ratio $m_2/m_1$. In the irrational case, the exceptional intersection lengths forming non-free subgroups constitute a discrete subset of $(0,m_1 + m_2)$ and are shown to coincide with the gap lengths appearing in the three-gap theorem. The analysis adapts the Nielsen reduction procedure to the weighted metric setting and exploits recurrence properties of continued fractions, leading to Theorem~\ref{thm:weighted}.

We now broaden our scope from metric trees (where every edge has length $1$) to the more general setting of \emph{weighted trees}. This allows us to consider automorphism groups of a wider class of geometric objects.
\begin{defn}
Let $X$ be a (combinatorial) tree and let 
\begin{equation}\label{eq:weight-function}
	w:\mathcal{E}(X)\to\mathbb R_{>0}
\end{equation}
be a function assigning a positive real number $w(e)$ to each edge $e$. 
The pair $(X,w)$ is called a \emph{weighted tree}.
\end{defn}
We can metricize the tree $(X,w)$ by regarding the weight $w(e)$ as the \emph{length} of each edge $e\in\mathcal{E}(X)$:
\begin{defn}
The \emph{length} of a geodesic path $P$, denoted by $\ell(P)$, is the sum of the weights of its constituent edges. This induces a natural metric on the vertex set, $d(v,w) = \ell([v,w])$.
\end{defn}
It is useful to consider the full metric realization of the weighted tree:
\begin{defn}
The \emph{geometric realization} $\mathrm{Real}(X,w)$ of a (symmetric directed) weighted tree is the metric graph obtained by identifying each combinatorial edge $e\in\mathcal E(X)$ with a closed interval of length $w(e)$ and gluing at vertices in the obvious way:
\begin{equation}\label{eq:realization-weighted-tree}
	\mathrm{Real}(X,w) = \left(\mathcal{V}(X)\sqcup\left(\bigsqcup_{e\in\mathcal{E}(X)}e\times [0,w(e)]\right)\right)/\sim,
\end{equation}
where the equivalence relation $\sim$ identifies, for each edge $e \in \mathcal{E}(X)$, the point $e\times\{0\}$ with $\alpha_e$ and the point $e\times\{w(e)\}$ with $\omega_e$.
\end{defn}
\begin{defn}
A \emph{(metric) automorphism} of the weighted tree $(X,w)$ is an isometry
\begin{equation}\label{eq:metric-automorphism}
	g:\mathrm{Real}(X,w)\to\mathrm{Real}(X,w)
\end{equation}
of the metric space $\mathrm{Real}(X,w)$. The group formed by the automorphisms is denoted by $\aut(X,w)$.
\end{defn}
\subsection{Classification Theorem for Weighted Trees}
Similarly to the unweighted case, we can ask when two hyperbolic automorphisms $\gamma_1, \gamma_2$ of a weighted tree $(X, w)$ generate a free group. Their translation lengths $m_1 = m_{\gamma_1}$, $m_2 = m_{\gamma_2}$ and the length of the intersection of their axes $l = \ell(A_{\gamma_1} \cap A_{\gamma_2})$ are now positive real numbers. We adopt the same normalizations and definitions for the sequences $\{m_i\}$, $\{q_i\}$, and $\{\gamma_i\}$ as in the unweighted case, defined by the continued fraction expansion of $\alpha = m_2 / m_1$.

We derive the following result, which generalizes Theorem~\ref{thm:main}:
\begin{thm}\label{thm:weighted}
Let $\gamma_1, \gamma_2$ be hyperbolic automorphisms of a weighted tree $(X, w)$ with geometric triple $(l, m_1, m_2)$ and let $\alpha = m_2 / m_1$.

If $\alpha$ is rational, then the sequence $\{m_i\}$ is finite and the group $\Gamma = \langle \gamma_1, \gamma_2 \rangle$ satisfies precisely the conclusion of Theorem~\ref{thm:main}.

If $\alpha$ is irrational, then the sequence $\{m_i\}$ is infinite and $\Gamma$ falls into one of the following cases:
\begin{enumerate}[label=(\arabic*)]
	\item If $l = 0$, then $\Gamma$ is a free group of rank two.
	\item If $0 < l < m_1 + m_2$, there are unique integers $j,q$ with
	\begin{equation}\label{eq:l-inequality-weighted}
		(m_1 + m_2) - m_j + (q-1)m_{j+1} < l \leq (m_1 + m_2) - m_j + q m_{j+1},\ j \geq 1,\ 0 \leq q \leq q_j - 1.
	\end{equation}
	\begin{enumerate}[label=(\alph*)]
		\item If $l = (m_1 + m_2) - m_j + q m_{j+1}$, then $\gamma_j\gamma_{j+1}^{-q}$ is hyperbolic; set
		\begin{equation}\label{eq:l0-weighted}
			l_0\coloneqq \ell(A_{\gamma_j\gamma_{j+1}^{-q}}\cap \gamma_{j+1}.A_{\gamma_j\gamma_{j+1}^{-q}}).
		\end{equation}
		\begin{itemize}
			\item If $l_0 \geq \frac{m_j - (q+1) m_{j+1}}{2}$, then $\Gamma$ is not free.
			\item If $l_0 < \frac{m_j - (q+1) m_{j+1}}{2}$, then $\Gamma$ is free of rank two.
		\end{itemize}
		\item If $l < (m_1 + m_2) - m_j + q m_{j+1}$, then $\Gamma$ is free of rank two.
	\end{enumerate}
	\item It is impossible to have $l \geq m_1 + m_2$.
\end{enumerate}
Except for the impossible case (3) for irrational $\alpha$, the generating pair $(\gamma_{\mathrm{a}},\gamma_{\mathrm{b}})$, their translation lengths, and the relevant intersection-length or minimum-distance data match those in Table \ref{tab:main-cases}.
\end{thm}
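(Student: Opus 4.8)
The overall strategy is to run exactly the same Nielsen-reduction algorithm as in the proof of Theorem~\ref{thm:main}, observing that Propositions~\ref{prop:schottky}, \ref{prop:reduce_ii}, \ref{prop:reduce_iii}, and \ref{prop:reduce_iv} are all purely metric statements whose proofs (closest-point projections, the displacement formulas of Propositions~\ref{prop:elliptic} and \ref{prop:hyperbolic}, and Corollaries~\ref{cor:aut_type}--\ref{cor:aut_impl}) go through verbatim on the geometric realization $\mathrm{Real}(X,w)$; only the fact that translation lengths were integers played no essential role. The single structural difference is that the Euclidean algorithm on the pair $(m_1,m_2)$ terminates if and only if $\alpha=m_2/m_1$ is rational, so one must split on this dichotomy.

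First I would dispose of the rational case: when $\alpha\in\mathbb Q$ the sequence $\{m_i\}_{i=1}^{k+1}$ is finite exactly as in \eqref{eq:Euc_algo}, Lemma~\ref{lem:euc} and Proposition~\ref{prop:termination} hold with identical proofs (replacing ``$\mathbb N$'' by ``$\mathbb R_{>0}$'' and noting the invariant \eqref{eq:invariant} is an identity of real numbers), and the proof of Theorem~\ref{thm:main} applies word for word, giving the claimed conclusion and the entries of Table~\ref{tab:main-cases}. Next, for irrational $\alpha$: here the Euclidean subtraction process \eqref{eq:euc-length-evolution} never reaches a zero remainder, so the candidate triple can never land in case~(IV) (that required $m_1'=0$, i.e.\ a terminating step). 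This immediately yields part~(3): if one had $l\ge m_1+m_2$, then since the invariant $l^{(n)}-m_1^{(n)}-m_2^{(n)}=l-m_1-m_2\ge 0$ persists and $m_1^{(n)},m_2^{(n)}\to 0$, the quantity $l^{(n)}-\min(m_1^{(n)},m_2^{(n)})$ would stay non-negative forever, so the algorithm would never terminate — contradicting that it must terminate in case~(II) or~(III) (these are the only surviving exits) once $m_1^{(n)},m_2^{(n)}$ become small while $l^{(n)}$ stays bounded below. Hence $l<m_1+m_2$ always, and the reduction terminates after finitely many steps. The remaining cases $l=0$ (immediate, Proposition~\ref{prop:schottky} case~(III)) and $0<l<m_1+m_2$ are then handled exactly as in Proposition~\ref{prop:termination} and the proof of Theorem~\ref{thm:main}: the interval $(0,m_1+m_2)$ is partitioned by the half-open intervals $\big((m_1+m_2)-m_j+(q-1)m_{j+1},\,(m_1+m_2)-m_j+qm_{j+1}\big]$ over $j\ge 1$, $0\le q\le q_j-1$ (this covers $(0,m_1+m_2)$ precisely because the telescoping sum $\sum_j q_j m_{j+1}$ of the block lengths converges to $m_1+m_2$ in the irrational case), selecting the unique $(j,q)$ in \eqref{eq:l-inequality-weighted}; then Propositions~\ref{prop:reduce_ii} and \ref{prop:reduce_iii} produce the dichotomy in (2a) via the same $l_0$ threshold $\tfrac{m_j-(q+1)m_{j+1}}{2}$, and (2b) follows from Proposition~\ref{prop:schottky} applied to the terminal case~(II) triple.

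The main obstacle — and the only genuinely new point beyond transcribing the integer argument — is verifying termination of the reduction in the irrational case without the ``$m_{k+1}=0$'' stopping criterion. Concretely one must check two things: that the block-length intervals above genuinely exhaust $(0,m_1+m_2)$ (equivalently, that $m_1+m_2-\sum_{i=1}^{j}q_i m_{i+1}=m_j+m_{j+1}\to 0$, which follows from $m_i\to 0$ for irrational $\alpha$, a standard property of continued fractions), and that the algorithm performs only finitely many subtraction steps before the triple drops into case~(II) or~(III) — this holds because, by \eqref{eq:invariant} and the fact that $l^{(n)}-\min(m_1^{(n)},m_2^{(n)})$ is non-increasing and strictly drops by $m_2^{(n)}$ at each Euclidean subtraction, it becomes negative after finitely many steps once $l<m_1+m_2$. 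Assembling these bookkeeping facts with the four reduction propositions and reading off Table~\ref{tab:main-cases} from the terminal geometric triple completes the proof.
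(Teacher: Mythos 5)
Your treatment of the rational case, the partition of $(0,m_1+m_2)$ by the intervals indexed by $(j,q)$, and the assembly of cases (1), (2a), (2b) from Propositions~\ref{prop:schottky}--\ref{prop:reduce_iv} all match the paper's argument (Lemma~\ref{lem:euc:irr} and the final assembly). However, your proof of part (3) --- the impossibility of $l\ge m_1+m_2$ when $\alpha$ is irrational --- has a genuine gap: it is circular. You correctly observe that under this hypothesis the quantity $l^{(n)}-\min(m_1^{(n)},m_2^{(n)})$ stays non-negative forever and case (IV) is unreachable, so the reduction algorithm never terminates; but you then declare this to be a contradiction ``with the fact that it must terminate.'' Nothing established up to that point forces termination --- the algorithm is a formal procedure on triples, and an infinite run is not by itself absurd. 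The burden of part (3) is precisely to supply a geometric reason why the configuration $l\ge m_1+m_2$ cannot occur in the first place, and your argument never leaves the formal bookkeeping to find one.

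The paper closes this gap with Lemma~\ref{lem:euc:inf}, which is the one genuinely new geometric ingredient of the weighted theorem and the only place local finiteness (more precisely, the finiteness of the combinatorial path $[v^-,\gamma_1\gamma_2 . v^-]$) enters. Assuming $l\ge m_1+m_2$, one sets $u=\gamma_1\gamma_2 . v^-$, so that $[v^-,u]\subset A_{\gamma_1}\cap A_{\gamma_2}$ has length $m_1+m_2$, and then pushes the branch vertex $v^-$ back and forth along this segment by suitable powers $\gamma_1^{q}$ and $\gamma_2^{-q}$. Each image $v_i$ lies in $[v^-,u]$ at distance $k_1m_1+k_2m_2$ from $v^-$ for integers $k_1,k_2\ge 0$, these distances are pairwise distinct because $m_2/m_1\notin\mathbb{Q}$, and every $v_i$ has valence $\ge 3$ since automorphisms preserve valence. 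This places infinitely many distinct vertices on a finite combinatorial path, a contradiction. Your proposal would be complete if you replaced the ``non-termination is a contradiction'' step with an argument of this kind; as written, part (3) is unproved.
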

The proof follows the same conceptual structure as that of Theorem~\ref{thm:main}. We perform the same reduction algorithm for the generating pair $(\gamma_1,\gamma_2)$ as well as the associated geometric triple $(l,m_1,m_2)$. The same outcome as in Lemma \ref{lem:euc} and Proposition \ref{prop:termination} is expected if $m_1$ and $m_2$ are commensurable, and a key difference occurs if they are not.
\begin{lem}\label{lem:euc:irr}
If $m_2/m_1\in\mathbb{Q}$, the change of the translation lengths agrees with the description in Lemma \ref{lem:euc}, where $m_k = \gcd(m_1,m_2)$ is the greatest common divisor for commensurable real numbers. The algorithm terminates in either case (II), (III), or (IV) for $l$ under the same conditions as in Proposition \ref{prop:termination}.

If $m_2/m_1\notin\mathbb{Q}$, the change of the translation lengths agrees with a Euclidean algorithm that does not terminate. Nevertheless, for $m_2<l< m_1 + m_2$, there are unique integers $j,q$ with
\begin{equation}\label{eq:l-irrational-inequality}
	(m_1 + m_2) - m_j + (q-1)m_{j+1} < l \leq (m_1 + m_2) - m_j + q m_{j+1},\ j\geq 1,\ 0 \leq q \leq q_j - 1.
\end{equation}
In this scenario, the algorithm terminates in either case (II) or case (III), depending on whether the inequality is strict.
\end{lem}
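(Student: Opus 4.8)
The plan is to separate the rational and irrational cases: the former follows verbatim from the unweighted arguments, while the latter combines the algebraic invariant of Lemma~\ref{lem:euc} with the fact that Euclidean remainders decay geometrically. For the rational case, let $d$ be the positive common measure of $m_1$ and $m_2$ — the largest $d>0$ with $m_1/d,\,m_2/d$ coprime positive integers — which plays the role of $\gcd(m_1,m_2)$. The recursion~\eqref{eq:Euc_algo} on $(m_1,m_2)$ is then the integer Euclidean algorithm on $(m_1/d,\,m_2/d)$ rescaled by $d$, hence terminates with $m_k=d$. The proofs of Lemma~\ref{lem:euc} and Proposition~\ref{prop:termination} rest only on the invariant $l^{(n)}-m_1^{(n)}-m_2^{(n)}=l-m_1-m_2$ of~\eqref{eq:invariant} and on the monotonicity of $l^{(n)}-\min(m_1^{(n)},m_2^{(n)})$; both are formal consequences of the reduction and swap rules and use no integrality, so those proofs carry over unchanged and yield the stated termination behavior in cases (II), (III), (IV).

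For the irrational case, I would first record the shape of the length evolution. Since $\alpha=m_2/m_1\notin\mathbb Q$, the recursion~\eqref{eq:Euc_algo} is the Euclidean (continued-fraction) algorithm for $\alpha$ and never produces a zero remainder, so $\{m_i\}$ is infinite with $m_1>m_2>m_3>\cdots>0$ and every $q_i\geq 1$. From $m_{i+2}=m_i-q_im_{i+1}\leq m_i-m_{i+1}$ together with $m_{i+2}<m_{i+1}$ one gets $m_{i+2}<m_i/2$, so $m_i\to 0$. Hence every length-pair produced by the (subtractive) reduction algorithm has the form $(m_j-qm_{j+1},\,m_{j+1})$ with $0\leq q\leq q_j$ and both entries in $(0,m_j]$; in particular the degenerate case (IV), which would require a vanishing translation length, never arises, and the evolution of $(m_1^{(n)},m_2^{(n)})$ is the non-terminating analogue of~\eqref{eq:euc-length-evolution}.

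Now fix $l$ with $m_2<l<m_1+m_2$ and set $\delta:=m_1+m_2-l>0$. By the invariant, at any visited state $l^{(n)}-\min(m_1^{(n)},m_2^{(n)})=\max(m_1^{(n)},m_2^{(n)})-\delta$; and since $\max(m_1^{(n)},m_2^{(n)})\leq m_j$ once the algorithm has entered round $j$, with $m_j\to 0$, this quantity eventually becomes negative, so the run halts, necessarily in case (II) or (III). To identify $j$ and $q$, observe that at a reduction state $(m_j-qm_{j+1},\,m_{j+1})$ (with $j\geq 1$, $0\leq q\leq q_j-1$) the invariant gives $l^{(n)}=l-(m_1+m_2)+m_j-(q-1)m_{j+1}$, so the algorithm reduces past that state exactly when $l\geq T_{j,q}:=(m_1+m_2)-m_j+qm_{j+1}$. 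Listing the $T_{j,q}$ in the order their states are encountered, one checks they increase strictly ($T_{j,q+1}-T_{j,q}=m_{j+1}$ within a round, $T_{j+1,0}-T_{j,q_j-1}=m_{j+2}$ between rounds), start at $T_{1,0}=m_2$, and increase to $m_1+m_2$. Thus the half-open intervals $\big(T_{j,q-1},\,T_{j,q}\big]$ — reading the left endpoint for $q=0$ as $T_{j-1,q_{j-1}-1}=(m_1+m_2)-m_j-m_{j+1}$, i.e.\ as the expression $(m_1+m_2)-m_j+(q-1)m_{j+1}$ in the lemma — partition $(m_2,m_1+m_2)$ and single out a unique pair $(j,q)$ with $l$ lying in its interval. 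Since $l^{(n)}-\min(m_1^{(n)},m_2^{(n)})$ is non-increasing, the run does not stop before reaching the state indexed by this interval, where $l^{(n)}=l-T_{j,q}+m_{j+1}$: if $l<T_{j,q}$ then $0<l^{(n)}<m_{j+1}$ and the run stops in case (II); if $l=T_{j,q}$ then $l^{(n)}=m_{j+1}$, one more reduction step gives $l^{(n+1)}=0$, and the run stops in case (III). This is the asserted dichotomy.

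The step I expect to be the main obstacle is not any single computation but the combinatorial bookkeeping just described — verifying that the thresholds $T_{j,q}$ are met in strictly increasing order and together exhaust the interval $(m_2,m_1+m_2)$, so that the state at which the continuation test first fails is exactly the one indexed by the interval containing $l$. The only genuinely analytic input, $m_i\to 0$ (needed to exclude an infinite run when $l<m_1+m_2$), is immediate from the estimate $m_{i+2}<m_i/2$.
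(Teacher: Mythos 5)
Your proposal is correct and follows essentially the same route as the paper: the rational case is inherited verbatim from Lemma~\ref{lem:euc} and Proposition~\ref{prop:termination}, and the irrational case is handled by showing $m_j\to 0$ and observing that the thresholds $(m_1+m_2)-m_j+qm_{j+1}$ partition $(m_2,m_1+m_2)$ into consecutive half-open intervals, which pins down the unique $(j,q)$ and the case (II)/(III) dichotomy. The paper phrases the partition via the strictly decreasing sequence $m_j+m_{j+1}\downarrow 0$ applied to $\delta=m_1+m_2-l$, but this is the same bookkeeping you carry out with the $T_{j,q}$.
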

\begin{proof}
The only nontrivial thing to show is the guaranteed termination for $m_2/m_1\notin\mathbb{Q}$ and $m_2<l< m_1 + m_2$. By the definition of the sequence $\{m_j\}$ and the irrationality of the ratio, we have that $\{m_j+m_{j+1}\}$ is strictly decreasing, and
\begin{equation}\label{eq:mj-plus-mj1-limit}
	\lim_{j\to\infty} (m_j+m_{j+1}) = 0.
\end{equation}
Hence, for any $m_2<l<m_1+m_2$, there exists an index $j\geq 1$, satisfying
\begin{equation}\label{eq:irrational-bracketing}
	m_j - (q_j-1)m_{j+1} = m_{j+1} + m_{j+2} \leq m_1+m_2 - l<m_j+m_{j+1}.
\end{equation}
Consequently, there further exists an integer $0 \leq q \leq q_j - 1$, such that $l$ satisfies the claimed inequality. The termination in either case (II) or case (III) follows straightforwardly from this fact.
\end{proof}
It remains to show that the condition $m_2/m_1\notin\mathbb{Q}$ with $l \geq m_1 + m_2$ is impossible as a geometric triple $(l,m_1,m_2)$ for a generating pair in $\aut(X,w)$.

\begin{lem}\label{lem:euc:inf}
	Suppose $(X,w)$ is a {\it locally finite} weighted tree, $(\gamma_1,\gamma_2)$ is a pair of hyperbolic automorphisms in $\aut(X,w)$, $(l,m_1,m_2)$ the associated geometric triple, and $m_2/m_1\notin\mathbb{Q}$. Then one must have $l<m_1+m_2$.
\end{lem}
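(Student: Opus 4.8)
The plan is to argue by contradiction. Assume $l \ge m_1+m_2$ and run on $(\gamma_1,\gamma_2)$ the reduction algorithm of Section~\ref{sec:schottky-criterion}, tracking the triples $(l^{(n)},m_1^{(n)},m_2^{(n)})$. Since $m_2/m_1\notin\mathbb Q$, the associated Euclidean algorithm does not terminate and every remainder $m_i$ is strictly positive; combined with the invariant \eqref{eq:invariant} this gives, at every state,
\begin{equation*}
  l^{(n)} \;=\; (l-m_1-m_2)+m_1^{(n)}+m_2^{(n)} \;\ge\; m_1^{(n)}+m_2^{(n)} \;\ge\; \min\bigl(m_1^{(n)},m_2^{(n)}\bigr),
\end{equation*}
so the triple always lies in case (I) and so does its candidate triple (case (III) would require $l^{(n)}=0$ and case (IV) would require some $m_i=0$, both excluded here). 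Hence the weighted analogue of Proposition~\ref{prop:reduce_ii} applies at every step: the reduction never terminates, each $(l^{(n)},m_1^{(n)},m_2^{(n)})$ is the genuine geometric triple of $(\gamma_1^{(n)},\gamma_2^{(n)})$, and $v^+$ survives as a vertex endpoint of the axis intersection after each reduction. Reading off the sequence \eqref{eq:gamma-sequence}, every $\gamma_i$ occurs as a component of some pair $(\gamma_1^{(n)},\gamma_2^{(n)})$, so $v^+\in A_{\gamma_i}$ for all $i\ge 1$.

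Next I would produce the geometric contradiction from local finiteness. By the proof of Lemma~\ref{lem:euc:irr} (see \eqref{eq:mj-plus-mj1-limit}) the translation lengths tend to $0$, i.e. $m_i\to 0$. Since $v^+$ lies on $A_{\gamma_i}$, the automorphism $\gamma_i$ moves $v^+$ by exactly $m_{\gamma_i}=m_i$ along its axis, so $\gamma_i.v^+$ is the point of $A_{\gamma_i}$ at distance $m_i$ from $v^+$ in the direction of translation, and the geodesic $[v^+,\gamma_i.v^+]$ is contained in $A_{\gamma_i}$. Now invoke local finiteness: $v^+$ has only finitely many incident edges, so there is $\varepsilon>0$ with $w(e)\ge\varepsilon$ for each edge $e$ incident to $v^+$. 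For $i$ large enough that $m_i<\varepsilon$, the point at distance $m_i$ from $v^+$ along $A_{\gamma_i}$ lies in the interior of the first edge that $A_{\gamma_i}$ traverses out of $v^+$, hence is not a vertex of $X$. But $\gamma_i$, being a metric automorphism of $(X,w)$, permutes the combinatorial vertices, so $\gamma_i.v^+$ is a vertex — a contradiction. Therefore $l<m_1+m_2$.

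I expect the main obstacle to be the bookkeeping that keeps the reduction inside case (I) at every step, so that Proposition~\ref{prop:reduce_ii} legitimately applies and $v^+$ is never dropped from the relevant axes; this is precisely where the invariant \eqref{eq:invariant} together with the irrationality of $\alpha$ (which forces all $m_i>0$ and thereby excludes the terminating cases (III) and (IV)) do the work. A secondary point that must be stated carefully is that the axis of a hyperbolic element of $\aut(X,w)$ is a union of combinatorial edges and that $\gamma_i$ carries vertices to vertices, since this is exactly what makes ``$\gamma_i.v^+$ a vertex at distance $m_i<\varepsilon$ from $v^+$'' absurd. I would also remark that discreteness of $\Gamma$ plays no role here: only local finiteness of $(X,w)$ is used.
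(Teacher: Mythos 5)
Your argument is correct, but it takes a genuinely different route from the paper's. The paper does not invoke the reduction algorithm at all: assuming $l\ge m_1+m_2$, it notes that the segment $[v^-,\gamma_1\gamma_2.v^-]$ of length $m_1+m_2$ lies inside $A_{\gamma_1}\cap A_{\gamma_2}$, and then bounces the point $v^-$ back and forth inside that one segment by alternately applying maximal powers of $\gamma_1$ and $\gamma_2^{-1}$; the resulting points sit at distances of the form $k_1m_1-k_2m_2$ from $v^-$, which are pairwise distinct by irrationality, producing infinitely many distinct branch points on a single finite combinatorial path --- an immediate contradiction. Your proof instead runs the full Nielsen reduction machinery: the invariant \eqref{eq:invariant} plus irrationality keeps every triple (and candidate triple) in case (I), so the weighted Proposition~\ref{prop:reduce_ii} applies forever, $v^+$ stays on every axis $A_{\gamma_i}$, and $m_i\to 0$ forces a hyperbolic element of arbitrarily small translation length through the fixed branch point $v^+$, contradicting local finiteness there. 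Your route is heavier (it presupposes the weighted versions of the reduction propositions, which the paper only establishes in the course of Theorem~\ref{thm:weighted}), but it yields a cleaner structural statement --- no hyperbolic elements of arbitrarily short translation length can pass through a fixed branch point of a locally finite weighted tree --- whereas the paper's argument is self-contained and elementary, and in fact uses local finiteness only through the finiteness of the combinatorial path $[v^-,u]$.

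One point to repair in your write-up: an element of $\aut(X,w)$ is by definition only an isometry of $\mathrm{Real}(X,w)$, so it need not ``permute the combinatorial vertices'' --- a valence-two vertex can be carried into the interior of an edge. What saves you is that $v^+$, being an endpoint of the intersection of two diverging axes, is a branch point (valence $\ge 3$), and isometries preserve the number of local directions; hence $\gamma_i.v^+$ must again be a branch point, while a point in the interior of an edge has only two local directions. State the contradiction in terms of branch points rather than vertices and the argument is airtight. (The paper's own proof quietly relies on the same distinction when it asserts its points $v_i$ have valence $\ge 3$.)
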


\begin{proof}
	Assume the opposite, that $l \geq m_1 + m_2$.  Let $u = \gamma_1\gamma_2.v$, then $d(v^-,u) = m_1+m_2\leq l$, and $[v^-,u]\subset A_{\gamma_1} \cap A_{\gamma_2}$. We will derive a contradiction by constructing an infinite sequence of distinct vertices $\{v_i\}_{i=0}^\infty$ of valence $\geq 3$ on the finite geodesic segment $[v^-,u]$:
	\begin{align*}
		v_0 & = v^-. \\
		\text{For odd } i: \quad v_i & = \gamma_1^{q} \cdot v_{i-1}, \quad \text{where } q = \left\lfloor \frac{d(v_{i-1}, u)}{m_1} \right\rfloor. \\
		\text{For even } i: \quad v_i & = \gamma_2^{-q} \cdot v_{i-1}, \quad \text{where } q = \left\lfloor \frac{d(v_{i-1}, v^-)}{m_2} \right\rfloor.
	\end{align*}
	The length $\ell([v^-,u]) = m_1 + m_2$ ensures that $d(v_{i-1}, u) \geq m_1$ when $i$ is odd and $d(v_{i-1}, v^-) \geq m_2$ when $i$ is even. Thus, the integer $q$ is always at least $1$, and each step moves the point a positive distance along the geodesic.

    By induction on $i$, one sees that $d(v^-,v_i)=m_1(k_1)+m_2(k_2)$ for some integers $k_1,k_2\ge0$ depending on $i$. Since $\frac{m_2}{m_1}$ is irrational, these distances are all distinct (otherwise we get $m_1(k_1-k_1')=m_2(k_2'-k_2)$). Hence all the points $v_i$ are distinct.
	
	However, this constructs an infinite set of distinct vertices with valence $\geq 3$ in the compact interval $[v^-, u]$, which is impossible for a combinatorial tree. This contradiction forces us to reject the initial assumption, proving that $l < m_1 + m_2$ must hold.
	\end{proof}

\begin{proof}[Proof of Theorem \ref{thm:weighted}]
Lemma \ref{lem:euc:inf} implies that the condition $m_2/m_1\notin\mathbb{Q}$ and $l\geq m_1+m_2$ does not occur. This corresponds to irrational case (3) in Theorem \ref{thm:weighted}.

Except for the case excluded above, we perform the same algorithm for $(\gamma_1,\gamma_2)$ and the associated geometric triple $(l,m_1,m_2)$ as described in Section \ref{sec:schottky-criterion}. By Lemma \ref{lem:euc:irr}, the algorithm terminates in case (II), (III), or (IV) within finitely many reduction steps. Propositions \ref{prop:schottky}, \ref{prop:reduce_ii}, \ref{prop:reduce_iii}, and \ref{prop:reduce_iv} then imply our claim, similarly to the proof of Theorem \ref{thm:main}:
\begin{itemize}
	\item When $l=0$, the algorithm terminates in case (II) instantly, which corresponds to case (1) in Theorem \ref{thm:weighted}.
	\item When $0<l<m_2$, the algorithm terminates in case (III) instantly, which corresponds to case (2b) in Theorem \ref{thm:weighted} with $j=1$ and $q=0$.
	\item When $m_2\leq l< m_1 + m_2 - \gcd(m_1,m_2)$ (if $m_1$ and $m_2$ are commensurable) or $m_2\leq l< m_1+m_2$ (if incommensurable), the algorithm terminates in case (II) or case (III) after reduction, which corresponds to case (2a)(i), (2a)(ii) or (2b) for certain integers $j$ and $q$.
	\item When $m_1$ and $m_2$ are commensurable and $l\geq m_1 + m_2 - \gcd(m_1,m_2)$, the algorithm terminates in case (IV), which corresponds to case (3), or case (2a)(i) with $(j,q) = (k-1,q_{k-1}-1)$.
\end{itemize}
\renewcommand{\qedsymbol}{}
\namedqedline{Theorem~\ref{thm:weighted}}
\end{proof}

As an interesting remark, the exceptional lengths in Theorem~\ref{thm:weighted} that determine the group's structure are intimately related to the \emph{three-gap theorem} (also known as the Steinhaus conjecture)\cite{marklof2017three}. This theorem states that for any irrational number $\alpha$ and positive integer $N$, the fractional parts $\{i\alpha - \lfloor i\alpha\rfloor \mid i = 1, \dots, N\}$ partition the circle $\mathbb{S}^1 = [0,1]/\sim$ into intervals with at most three distinct lengths. As $N$ varies, the set of all gap lengths that appear for a fixed $\alpha$ is countable:
\begin{prop}[\cite{halton1965distribution}, Theorem 2]\label{prop:3_gaps}
Let $0<\alpha<1$ be an irrational number with continued fraction expansion
\begin{equation}\label{eq:alpha-cf}
	\alpha = [0; q_1, q_2, q_3, \dots].
\end{equation}
Let $\alpha_1 = \alpha$ and define
\begin{equation}\label{eq:alpha-recursion}
	\alpha_{i+1} = \alpha_i^{-1} - \lfloor \alpha_i^{-1}\rfloor,\ \forall i\in\mathbb{N}_+.
\end{equation}
Then, the set of distinct gap lengths arising in the three-gap theorem for $\alpha$ consists of values
\begin{equation}\label{eq:three-gap-lengths}
	\left(1 - q \alpha_j\right) \prod_{i=1}^{j-1} \alpha_i,
\end{equation}
for any $j\geq 1$ and $0\leq q\leq q_{j}-1$.
\end{prop}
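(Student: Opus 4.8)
Since the statement is \cite{halton1965distribution}, the plan is to reconstruct that argument, deriving it from the quantitative three-distance theorem together with one continued-fraction identity. Write $\alpha=[0;q_1,q_2,\dots]$, let $p_j/D_j$ denote the convergents of $\alpha$ (so $p_{-1}=1$, $p_0=0$, $D_{-1}=0$, $D_0=1$, $D_j=q_jD_{j-1}+D_{j-2}$), and set $\theta_j=(-1)^j(D_j\alpha-p_j)>0$; thus $\theta_{-1}=1$ and $\theta_0=\alpha$. The first step is the identity
\begin{equation}\label{eq:theta-product}
	\theta_{j-1}=\prod_{i=1}^{j}\alpha_i\qquad(j\ge0),\qquad\text{equivalently}\qquad \theta_j=\alpha_{j+1}\,\theta_{j-1},
\end{equation}
which I would prove by induction using the two recursions $\theta_{j-2}=q_j\theta_{j-1}+\theta_j$ (immediate from $D_j=q_jD_{j-1}+D_{j-2}$ and the alternating signs) and $\alpha_j^{-1}=q_j+\alpha_{j+1}$ (rearranging $q_j=\lfloor\alpha_j^{-1}\rfloor$, $\alpha_{j+1}=\alpha_j^{-1}-\lfloor\alpha_j^{-1}\rfloor$). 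Granting \eqref{eq:theta-product}, the quantity in \eqref{eq:three-gap-lengths} simplifies:
\begin{equation}\label{eq:rewrite}
	\bigl(1-q\alpha_j\bigr)\prod_{i=1}^{j-1}\alpha_i\;=\;\theta_{j-2}-q\,\theta_{j-1}\;=\;(q_j-q)\,\theta_{j-1}+\theta_j,
\end{equation}
so that the candidate set is $S=\bigcup_{j\ge1}\{\,t\,\theta_{j-1}+\theta_j:1\le t\le q_j\,\}$.

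Next I would invoke the explicit three-distance theorem (Slater, van Ravenstein; see also \cite{marklof2017three}): every $N\ge1$ has a unique expansion $N=sD_j+D_{j-1}+r$ with $j\ge0$, $1\le s\le q_{j+1}$, $0\le r<D_j$, since the intervals $[D_j+D_{j-1},\,D_{j+1}+D_j)$ tile $[1,\infty)$, and for such $N$ the arc lengths cut out by $\{0,\{\alpha\},\dots,\{(N-1)\alpha\}\}$ on $\mathbb{R}/\mathbb{Z}$ are contained in
\begin{equation}\label{eq:three-gaps}
	\bigl\{\;\theta_j,\;\;\theta_{j-1}-s\,\theta_j,\;\;\theta_{j-1}-(s-1)\theta_j\;\bigr\},
\end{equation}
the last being the sum of the first two, the middle one occurring precisely when $r>0$, and $N=1$ the degenerate case in which only the single arc of length $\theta_{-1}=1$ appears. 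Applying $\theta_{j-1}=q_{j+1}\theta_j+\theta_{j+1}$ turns the three entries into $\theta_j$, $(q_{j+1}-s)\theta_j+\theta_{j+1}$, and $(q_{j+1}-s+1)\theta_j+\theta_{j+1}$; comparing with the description of $S$ from \eqref{eq:rewrite}, each lies in $S$ — for $\theta_j$ use the index-$(j+2)$ summand, for $(q_{j+1}-s)\theta_j+\theta_{j+1}$ note it belongs to the index-$(j+1)$ family when $s<q_{j+1}$ and equals $\theta_{j+1}$ when $s=q_{j+1}$. Hence every arc length that ever occurs lies in $S$.

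For the reverse inclusion I would realize each element of $S$ as an arc length: given $j'\ge1$ and $1\le t\le q_{j'}$, put $j=j'-1$, $s=q_{j'}-t+1\in\{1,\dots,q_{j'}\}$, and take $N=sD_j+D_{j-1}$ (so $r=0$); then by \eqref{eq:three-gaps} the value $\theta_{j-1}-(s-1)\theta_j=t\,\theta_{j'-1}+\theta_{j'}$ appears. For $j'=1$ this is the elementary observation that, when $s\le q_1$, the points $0,\alpha,\dots,(s-1)\alpha$ sit in increasing order in $[0,1)$ and leave a wrap-around arc of length $1-(s-1)\alpha=\theta_{-1}-(s-1)\theta_0$. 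Together with the first inclusion, this shows that the set of distinct arc lengths, taken over all $N$, is exactly $S$, which by \eqref{eq:rewrite} is the set in \eqref{eq:three-gap-lengths}.

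The routine parts are the induction for \eqref{eq:theta-product} and the sign bookkeeping behind $\theta_{j-2}=q_j\theta_{j-1}+\theta_j$. The step I expect to require real care is the interface between the two parametrizations: verifying that, as $N$ runs over $\mathbb{N}$, the pairs $(j,s)$ sweep out exactly the pairs $(j',t)$ that index $S$, with no length dropped and none produced that falls outside the range $0\le q\le q_j-1$ of \eqref{eq:three-gap-lengths} — in particular treating the boundary index $j=1$ (where $\theta_{-1}=1$) and the two-arc configurations $r=0$ correctly, and reconciling whatever normalization of $(j,s,r)$ one quotes in \eqref{eq:three-gaps} with the one used here. If a self-contained treatment is preferred, \eqref{eq:three-gaps} can itself be established by induction on $N$ via the Ostrowski expansion and the renormalization dynamics $\alpha\mapsto\alpha^{-1}-\lfloor\alpha^{-1}\rfloor$, at the cost of relocating this same bookkeeping into that lemma.
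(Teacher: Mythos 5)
The paper does not actually prove this proposition: it is imported verbatim as Theorem~2 of \cite{halton1965distribution} and used only as a black box in the corollary that follows it, so there is no internal argument to compare yours against. Judged on its own terms, your reconstruction is correct and follows the standard Slater/van Ravenstein route. The identity $\theta_{j-1}=\prod_{i=1}^{j}\alpha_i$ does follow by induction from $\theta_{j-2}=q_j\theta_{j-1}+\theta_j$ and $\alpha_j^{-1}=q_j+\alpha_{j+1}$, and it correctly converts the expression $(1-q\alpha_j)\prod_{i=1}^{j-1}\alpha_i$ into $\theta_{j-2}-q\theta_{j-1}=(q_j-q)\theta_{j-1}+\theta_j$, so the target set is indeed $S=\bigcup_{j\ge1}\{t\theta_{j-1}+\theta_j:1\le t\le q_j\}$; your two inclusions against the explicit three-distance theorem, and in particular the index translation $t=q_{j+1}-s+1$ in the reverse direction, check out (the quoted parametrization $N=sD_j+D_{j-1}+r$ and gap lengths $\theta_j$, $\theta_{j-1}-s\theta_j$, $\theta_{j-1}-(s-1)\theta_j$ are consistent with direct computation for, e.g., $\alpha=\sqrt2-1$ and small $N$). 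Two points you should make explicit in a write-up: (i) the value $1=\theta_{-1}$, listed by the proposition at $(j,q)=(1,0)$, is realized only by the degenerate configuration $N=1$, so exact set equality depends on counting the full circle as a gap there (the paper's convention uses the points $\{i\alpha\}$, $i=1,\dots,N$, which differ from yours by a rotation, so the gap lengths agree); and (ii) the forward inclusion needs the observation that $\theta_j$ enters $S$ only through the index-$(j+2)$ family via $\theta_j=q_{j+2}\theta_{j+1}+\theta_{j+2}$, since it does not lie in the index-$(j+1)$ family $\{t\theta_j+\theta_{j+1}:t\ge1\}$. The only genuinely unproved ingredient is the explicit three-gap statement itself, which you quote rather than derive; given that the proposition under discussion is itself a citation, and that \cite{marklof2017three} in the bibliography supplies a proof of that input, stopping there is reasonable.
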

A connection between Theorem \ref{thm:weighted} and the three-gap theorem arises from the Euclidean algorithm and continued fractions:
\begin{cor}
In the context of Theorem~\ref{thm:weighted}, let $\alpha = m_2/m_1$ be irrational, and let
\begin{equation}\label{eq:L-exceptional}
	\mathcal{L} = \{ (m_1 + m_2) - m_j + q m_{j+1} \,|\, j \geq 1,\ 0 \leq q \leq q_j - 1 \}
\end{equation}
be the set of \emph{exceptional intersection lengths}. Then the normalized set
\begin{equation}\label{eq:normalized-gaps}
	\left\{ 1 + \alpha - (l/m_1) \,|\, l \in \mathcal{L} \right\}
\end{equation}
is exactly the set of all gap lengths that occur in the three-gap theorem for the irrational number $\alpha$.
\end{cor}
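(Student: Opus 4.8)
The plan is to show that the affine normalization $l\mapsto 1+\alpha-l/m_1$ carries the closed-form expression defining $\mathcal L$ onto the closed-form expression for three-gap lengths in Proposition~\ref{prop:3_gaps}, \emph{term by term} over the common index set $\{(j,q):j\ge 1,\ 0\le q\le q_j-1\}$. Equality of the two sets will then be immediate, and since only set equality is asserted, no injectivity of $(j,q)\mapsto(m_j-qm_{j+1})/m_1$ will be needed.

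First I would record the elementary reduction that the normalization $m_1\ge m_2>0$ together with the irrationality of $\alpha$ forces $0<\alpha<1$, so that Halton's hypothesis applies to $\alpha=m_2/m_1$. The key step is to identify the auxiliary sequence of Proposition~\ref{prop:3_gaps} with the consecutive ratios of Euclidean remainders: $\alpha_j=m_{j+1}/m_j$ for all $j\ge 1$. This is a short induction. One has $\alpha_1=\alpha=m_2/m_1$ by definition; and if $\alpha_j=m_{j+1}/m_j$, then by \eqref{eq:Euc_algo}
\[
\alpha_j^{-1}=\frac{m_j}{m_{j+1}}=q_j+\frac{m_{j+2}}{m_{j+1}},\qquad 0<\frac{m_{j+2}}{m_{j+1}}<1,
\]
where the strict bounds use $m_{j+2}=m_j-q_jm_{j+1}$ and the fact that the Euclidean algorithm for incommensurable $m_1,m_2$ never terminates, so $m_{j+2}>0$. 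Hence $\lfloor\alpha_j^{-1}\rfloor=q_j$ and $\alpha_{j+1}=\alpha_j^{-1}-\lfloor\alpha_j^{-1}\rfloor=m_{j+2}/m_{j+1}$. The same computation shows that the partial quotients $q_j=\lfloor m_j/m_{j+1}\rfloor$ of the Euclidean algorithm are precisely the continued-fraction digits of $\alpha=[0;q_1,q_2,\dots]$, so the index ranges appearing in the two statements literally coincide.

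It then remains to carry out the telescoping computation. From $\alpha_i=m_{i+1}/m_i$ one gets $\prod_{i=1}^{j-1}\alpha_i=m_j/m_1$ and $q\alpha_j=qm_{j+1}/m_j$, so the $(j,q)$-th three-gap length of Proposition~\ref{prop:3_gaps} is
\[
\bigl(1-q\alpha_j\bigr)\prod_{i=1}^{j-1}\alpha_i=\frac{m_j-qm_{j+1}}{m_1}.
\]
On the other hand, for $l=(m_1+m_2)-m_j+qm_{j+1}\in\mathcal L$ a direct substitution gives $1+\alpha-l/m_1=(m_1+m_2-l)/m_1=(m_j-qm_{j+1})/m_1$, the same value. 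Letting $(j,q)$ range over $j\ge 1$, $0\le q\le q_j-1$ and invoking Proposition~\ref{prop:3_gaps} yields the claimed equality of sets.

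I expect no genuine obstacle in this argument; the only part demanding care is the bookkeeping in the induction $\alpha_j=m_{j+1}/m_j$, and the verification that the edge cases — in particular $q=0$ and the top digit $q=q_j-1$ — lie inside Halton's index range exactly as stated, so that neither side of the desired set equality acquires stray or missing values. A secondary point worth spelling out is the reconciliation of the two occurrences of the symbol ``$q_j$'': the Euclidean partial quotients of Section~\ref{sec:schottky-criterion} and the continued-fraction digits of $\alpha$ in Proposition~\ref{prop:3_gaps}, which the induction above shows to agree.
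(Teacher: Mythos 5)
Your proposal is correct and follows essentially the same route as the paper's own proof: identify $\alpha_j=m_{j+1}/m_j$, observe that the Euclidean partial quotients coincide with the continued-fraction digits of $\alpha$, and match the two closed-form expressions term by term via $1+\alpha-l_{j,q}/m_1=(m_j-qm_{j+1})/m_1=(1-q\alpha_j)\prod_{i=1}^{j-1}\alpha_i$. Your write-up is in fact slightly more careful than the paper's, spelling out the induction and the non-termination of the Euclidean algorithm in the incommensurable case.
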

\begin{proof}
Define $\alpha_j = m_{j+1}/m_j$ for $j \geq 1$, so that $\alpha_1 = \alpha$. The recurrence relation for the Euclidean algorithm becomes:
\begin{equation}\label{eq:alpha-euclidean}
	q_j = \lfloor \alpha_j^{-1} \rfloor, \quad \alpha_{j+1} = \alpha_j^{-1} - q_j.
\end{equation}
This is precisely the recurrence defining the continued fraction expansion of $\alpha$:
\begin{equation}\label{eq:alpha-cf-again}
	\alpha = [0; q_1, q_2, q_3, \dots].
\end{equation}
Now, consider an element of $\mathcal{L}$:
\begin{equation}\label{eq:ljq-definition}
	l_{j,q} = (m_1 + m_2) - m_j + q m_{j+1}.
\end{equation}
It corresponds to the normalized value
\begin{equation}\label{eq:normalized-ljq}
	1 + \alpha - \frac{l_{j,q}}{m_1} = \frac{(m_1 + m_2) - l_{j,q}}{m_1} = \frac{m_j - q m_{j+1}}{m_1}.
\end{equation}
We can express this quantity in terms of the $\alpha_i$:
\begin{equation}\label{eq:normalized-ljq-alpha}
	\frac{m_j - q m_{j+1}}{m_1} = \left(1 - q \alpha_j\right) \cdot \frac{m_j}{m_1} = \left(1 - q \alpha_j\right) \prod_{i=1}^{j-1} \alpha_i,
\end{equation}
for $j \geq 1$ and $0 \leq q \leq q_j - 1$. As in Proposition \ref{prop:3_gaps}, these values represent the distinct gap lengths arising in the three-gap theorem for the irrational number $\alpha = [0; q_1, q_2, \dots]$. This completes the identification.
\end{proof}

\newpage


\begin{thebibliography}{99}

\bibitem{conder2020discrete}
M.~J. Conder,
Discrete and free two-generated subgroups of $\mathrm{SL}_2$ over non-archimedean local fields,
\textit{J. Algebra} 553 (2020), 248--267.

\bibitem{eick2014constructive}
B. Eick, M. Kirschmer, and C. Leedham-Green,
The constructive membership problem for discrete free subgroups of rank~2 of $\mathrm{SL}_2(\mathbb{R})$,
\textit{LMS J. Comput. Math.} 17 (2014), no.~1, 345--359.

\bibitem{gerritzenvdput}
L. Gerritzen and M. van~der~Put,
\textit{Schottky Groups and Mumford Curves},
Springer, 1980.

\bibitem{GilmanSwarup1986}
J. Gilman and G.~A. Swarup,
Groups with Trees: an Algorithmic Approach,
\textit{Amer. J. Math.} 108 (1986), no.~6, 1303--1315.

\bibitem{halton1965distribution}
J.~H. Halton,
The distribution of the sequence $\{n \xi\}$ (n = 0,1,2,\ldots),
\textit{Math. Proc. Camb. Philos. Soc.} 61 (1965), no.~3, 665--670.

\bibitem{hersonsky1997hausdorff}
S. Hersonsky and J. Hubbard,
The Hausdorff dimension of limit sets of geometrically finite groups,
\textit{Ergodic Theory Dynam. Systems} 17 (1997), no.~4, 869--884.

\bibitem{jorgensen1976discrete}
T. J{\o}rgensen,
On discrete groups of M\"obius transformations,
\textit{Amer. J. Math.} 98 (1976), no.~3, 739--749.

\bibitem{Kirschmer2017TheCM}
M. Kirschmer and M.~G. R{\"u}ther,
The constructive membership problem for discrete two-generator subgroups of $\mathrm{SL}_2(\mathbb{R})$,
\textit{J. Algebra} 480 (2017), 519--548.

\bibitem{lubotzky1991lattices}
A. Lubotzky,
Lattices in rank one Lie groups over local fields,
\textit{Geom. Funct. Anal.} 1 (1991), no.~4, 405--431.

\bibitem{lubotzky1983tree}
A. Lubotzky,
\textit{Discrete Groups, Expanding Graphs and Invariant Measures},
Birkh\"auser, 1994.

\bibitem{marklof2017three}
J. Marklof and A. Str{\"o}mbergsson,
The three gap theorem and the space of lattices,
\textit{Amer. Math. Monthly} 124 (2017), no.~8, 741--745.

\bibitem{rosenberger1972fuchssche}
G. Rosenberger,
Fuchssche Gruppen, die freies Produkt zweier zyklischer Gruppen sind, und die Gleichung $x^2 + y^2 + z^2 = xyz$,
\textit{Math. Ann.} 199 (1972), no.~4, 213--227.

\bibitem{serre2002trees}
J.-P. Serre,
\textit{Trees},
Springer, 2002.

\end{thebibliography}
\end{document}